\documentclass{amsart}
\usepackage{amssymb}
\usepackage{enumerate}
\usepackage{upref}
\usepackage[T1]{fontenc}
\usepackage[utf8]{inputenc}
\usepackage{mathtools}
\usepackage{tikz}
\usetikzlibrary{matrix}
\usepackage{tikz-cd}

\numberwithin{equation}{section}

\newtheorem{theorem}{Theorem}%[section]
\newtheorem{lemma}[theorem]{Lemma}
\newtheorem{proposition}[theorem]{Proposition}

\theoremstyle{definition}
\newtheorem{definition}[theorem]{Definition}

\newtheorem{question}[theorem]{Question}

\theoremstyle{remark}

\newcommand{\I}{[0,1]}

\newcommand{\R}{\mathbb{R}}
\newcommand{\C}{\mathbb{C}}
\newcommand{\N}{\mathbb{N}}

\newcommand{\Olo}{\mathcal{O}}
\newcommand{\hol}{\mathcal{O}}

\newcommand{\Aut}{\operatorname{Aut}}
\newcommand{\VF}{\operatorname{VF}}
\newcommand{\CVF}{\operatorname{CVF}}
\newcommand{\SL}{\operatorname{SL}}

\newcommand{\GL}{\operatorname{GL}}

\begin{document}

\title[Parametric Jet Interpolation for manifolds with DP]{Parametric Jet Interpolation for Stein manifolds with the Density Property}
\author[A. Ramos-Peon]{Alexandre Ramos-Peon}
\address{Matematisk Institutt, Universitetet i Oslo. Postboks 1053, Blindern. 0316 OSLO, Norway}
\email{alexaram@math.uio.no}
\author{Riccardo Ugolini}
\address{IMFM, University of Ljubljana, Jandranska Ulica 19, 1000 Ljubljana, Slovenia}
\email{riccardo.ugolini@imfm.si}

\subjclass[2010]{Primary: 32A10}

\date{\today}

\keywords{Complex Analysis, Automorphisms, Density Property}

\begin{abstract}
Given a Stein manifold with the density property, we show that under a suitable topological condition it is possible to prescribe derivatives at a finite number of points to automorphisms depending holomorphically on a Stein parameter. This is an Oka property of the manifold and is related to its holomorphic flexibility. 
\end{abstract}

\maketitle

\section{Introduction}\label{sec:intro}

Given a (connected) complex manifold $X$, it is often of interest to study its group of holomorphic automorphisms, denoted by $\Aut(X)$. In many cases this object can not be described in a simple way, but one may determine properties of this group and of its action on $X$. A basic property is transitivity, which can be seen as a special case of $N$-transitivity.
\begin{definition}
Given $N \in \N$ and a group $G$ equipped with a left action on a set $X$, we say that the action is $N$-transitive if for any two subsets $\{a_1,\dots,a_N\}, \{b_1,\dots,b_N\}$ of $X$ consisting of $N$ distinct elements, there exists $g\in G$ such that $ga_j=b_j$ for $j=1,\dots,N$. The action is \textit{infinitely transitive} if it is $N$-transitive for every $N \in \N$.
\end{definition}
When one deals with complex manifolds and their group of automorphisms, it is possible to consider not only pointwise interpolation, but also \textit{jet interpolation}. This means that we wish to find holomorphic automorphisms of $X$ with prescribed values of all derivatives (up to a given order). The first result in this direction, due to Forstneri\v c \cite{F-Interpolation} for $X=\C^n$, uses the dense subgroup of $\Aut(\C^n)$ generated by shears, which are some well-known automorphisms that appeared in the seminal paper by Rosay and Rudin about self-maps of $\C^n$ \cite{RosayRudin}.

An example of more general complex manifolds for which $\Aut(X)$ acts infinitely transitively is provided by Stein manifolds with the \textit{density property}. A complex manifold $X$ has the \textit{density property} if the Lie algebra generated by {complete} vector fields (those whose flows are defined for all times) is dense in the Lie algebra of all vector fields in the compact-open topology.

This notion (see Definition \ref{def:DP} below for a more detailed discussion), first introduced by Varolin \cite{Varolin1}, has turned out to be fruitful, allowing for new insights into classical questions. Many constructions are possible in Stein manifolds with the density property and there has been an ongoing effort to determine which manifolds have the density property. For a complete account, we refer the interested reader to the monograph \cite[Chapter 4]{F} and the references therein.

In another paper \cite{VarolinII}, Varolin proved a jet interpolation theorem for holomorphic automorphisms of a Stein manifold with the density property. In this paper, we generalize his result to holomorphic families of jets.

Given an $N$-tuple $\hat{x}=(\hat{x}_1,\dots,\hat{x}_N)$ of distinct points in $X$,  
we can consider the space $Y$ of all possible collections of nondegenerate $k$-jets at the points $\{ \hat{x}_i \}_{i=1}^N$ such that their respective images are distinct (see Section 2 for  formal definitions). The following is the main theorem of this paper. 
\begin{theorem} \label{main}
Let $W$ be a Stein manifold, $X$ a Stein manifold with the density property, $k,N\in \N_{\geq 0}$, and $(\hat{x}_1,\dots,\hat{x}_N)$ an $N$-tuple of distinct points in $X$. For each $i=1,\dots,N$ 
let $\gamma_i^w$ be a nondegenerate $k$-jet at $\hat{x}_i$ depending holomorphically on the parameter  $w\in W$. Then there exists a null-homotopic parametric family of automorphisms $F^w\in\Aut(X)$ depending holomorphically on $w \in W$, such that the $k$-jet of $F^w$ at $\hat{x}_i$ equals $\gamma_i^w$ for all $i=1,\dots,N$ and $w\in W$ if and only if $\gamma=(\gamma_1,\dots,\gamma_N):W\to Y$ is null-homotopic.
\end{theorem}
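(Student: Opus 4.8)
I would dispatch the ``only if'' direction first, since it is routine. Sending an automorphism to the tuple of its $k$-jets at $\hat x_1,\dots,\hat x_N$ gives a map $J\colon\Aut(X)\to Y$ which is continuous for the compact-open topology (by the Cauchy estimates) and which lands in $Y$, an automorphism being injective with everywhere invertible differential. If $F^w$ is a null-homotopic holomorphic family with $J(F^w)=\gamma^w$, then composing $J$ with a homotopy from a constant family to $F^\bullet$ exhibits $\gamma=J\circ F^\bullet$ as homotopic to a constant, hence null-homotopic. So the real content is the converse, which I would split into a topological lifting step and a holomorphic (Oka-type) approximation step, both resting on one analytic engine.

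\emph{Topological step.} Fix a null-homotopy $\gamma_t\colon W\to Y$, $t\in[0,1]$, with $\gamma_1=\gamma$ and $\gamma_0$ the constant map at some $y_0\in Y$. By Varolin's jet interpolation theorem \cite{VarolinII}, $\Aut(X)$ acts transitively on $Y$ by post-composition of jets, and $J$ is the orbit map of this action through a base point; in particular $J$ is onto, so there is $A\in\Aut(X)$ with $J(A)=y_0$, and $A$ is joined to $\mathrm{id}_X$ by a path in $\Aut(X)$ since it is built from flows of complete fields. I would next show $J$ is a Serre fibration: the analytic engine below produces continuous local sections of $J$ over a neighbourhood basis of $Y$ --- over a small ball $B\subset Y$, which is a contractible Stein parameter, this is an \emph{unobstructed} parametric jet interpolation --- and an orbit map admitting local sections is a fibre bundle over the orbit. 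Lifting the constant $\gamma_0$ to the constant family $w\mapsto A$ and applying the homotopy-lifting property to $\gamma_t$, one obtains a continuous family $\widetilde F^w\in\Aut(X)$ with $J(\widetilde F^w)=\gamma^w$; it is joined through the lifted homotopy to $w\mapsto A$, and thence to $\mathrm{id}_X$, hence null-homotopic.

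\emph{Oka step.} It then remains to deform $\widetilde F^\bullet$ to a \emph{holomorphic} family $F^\bullet\colon W\to\Aut(X)$ with the same jets, the deformation running through continuous families so that null-homotopy is preserved. This I would do by a parametric Anders\'en--Lempert argument: exhaust $W$ by $\Olo(W)$-convex compacts, and on each step approximate the current family of automorphisms (already holomorphic on the previous compact) by one holomorphic in $w$, then restore the finitely many jet conditions at the $\hat x_i$ to equality by post-composing with flows of holomorphically parametrized complete fields --- the density property furnishes Lie combinations of complete fields realizing any prescribed jets at the $\hat x_i$, since polynomials are dense --- and remove the residual jet-error by an iterative Runge approximation scheme as in \cite{VarolinII}, all holomorphically in $w$. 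A diagonal argument over the exhaustion with summable errors produces $F^\bullet$, the classical convergence theorems of Anders\'en--Lempert theory guaranteeing that the limiting self-maps are genuine automorphisms.

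The step I expect to be the main obstacle is exactly this analytic engine: a parametric Anders\'en--Lempert theorem with \emph{exact} jet interpolation --- performing the isotopy approximation holomorphically in the Stein parameter, keeping the successive jet-corrections compatible with holomorphic dependence on $w$ and convergent, and controlling the exhaustion of the noncompact $W$ so that the limit is genuinely holomorphic and genuinely homotopic to the identity. The same engine, over a contractible ball in $Y$ where it is unobstructed, also supplies the local sections used in the topological step. Granting it, the equivalence is complete, since the construction never produces anything but null-homotopic families.
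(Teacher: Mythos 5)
Your ``only if'' direction is fine, and your instinct that everything hinges on one ``analytic engine'' --- a parametric Andersén--Lempert scheme with exact jet interpolation and enough control to survive an exhaustion of $W\times X$ --- is exactly right. But that engine is the entire content of the theorem, and the way you propose to build it does not work. You say the density property ``furnishes Lie combinations of complete fields realizing any prescribed jets at the $\hat x_i$, all holomorphically in $w$,'' and that one then removes residual errors by Runge approximation with summable errors. The obstruction is quantitative: for the infinite composition over the exhaustion to converge, the correcting automorphisms at step $j$ must be uniformly close to the identity on a large compact $K_j\subset X$, which forces the time-functions $f_i(w)$ of the complete flows to be small wherever the jet being realized is close to $[Id]_{\hat x}$. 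Already for $1$-jets this fails: writing the jet via the Ivarsson--Kutzschebauch solution of the Vaserstein problem, the unipotent factors $G_i(w)$ cannot all vanish at a point $w_0$ where $G(w_0)=Id$ (the map $\psi_M$ is not a submersion along the locus $z_1=\dots=z_{M-1}=0$; the diagonal matrix $\mathrm{diag}(1/w,w)$ gives an explicit counterexample). So the ``restore the jets by post-composing with flows'' step produces corrections that are \emph{not} small on $K_j$, and the scheme diverges. The actual proof circumvents this by only ever realizing jets that are already close to $[Id]_{\hat x}$, using the local biholomorphism $\C^M\to Y$, $(t_1,\dots,t_M)\mapsto\phi^{t_1}_{\tilde\theta_1}\circ\dots\circ\phi^{t_M}_{\tilde\theta_M}([Id]_{\hat x})$, coming from lifts of complete fields (ellipticity of the jet space $Y$); near the identity the flow times are automatically small, hence the automorphisms are close to the identity on any prescribed compact. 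This in turn requires re-engineering the homotopy at every induction step so that it is uniformly close to $[Id]_{\hat x}$ for \emph{all} $t$, not just at the endpoint --- a separate lemma you have no analogue of.

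Your topological step has a related circularity: the local sections of $J\colon\Aut(X)\to Y$ needed to make $J$ a fibration are themselves instances of the unproved analytic engine, and the subsequent ``Oka step'' --- deforming a continuous family $W\to\Aut(X)$ to a holomorphic one --- presupposes an Oka principle for $\Aut(X)$-valued maps that is not available (this is the open territory studied by Forstneri\v c and L\'arusson, not a tool one may invoke). The proof in the paper never produces a continuous lift to $\Aut(X)$ at all; it applies the h-principle only to the finite-dimensional elliptic manifold $Y$ of jets, upgrading the null-homotopy to a smooth isotopy of holomorphic maps $W\to Y$, and then constructs the automorphisms directly by the inductive approximation described above.
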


The special case $k=0$ corresponds to pointwise interpolation and was proved by Kutzschebauch and the first author \cite{K-R}. For arbitrary $k,N \in \N$ and $X=\C^n, \ n>1$, this result was proved by the second author \cite{Ugolini}.

The present paper is part of a common effort to describe so-called Oka properties of groups of automorphisms. The main results in this direction are due to Forstneri\v c and L\'arusson in  \cite{FrancFinnur}, where the authors focus on $\Aut(\C^n)$ and many of its subgroups. We refer to \cite[Chapters 5 \& 6]{F} for a comprehensive survey on Oka theory.

We now outline the proof strategy for Theorem \ref{main}:
\begin{enumerate} [(i)]
\item Modify the homotopy of jets $\gamma^t:W\to Y$, assumed to exist by hypothesis, to an isotopy of holomorphic maps $\gamma^t:W\to Y$ (smooth on $t\in [0,1]$), connecting the given jet $\gamma^1:W\to Y$ to the constant map $\gamma^0:W\to Y$ consisting of the jet of the identity map on $X$; 
\item Use the homotopy $\gamma^t$ from (i) to construct a homotopy of injective holomorphic maps $F_t$, defined on an open neighborhood of the fixed $N$-tuple $\hat{x}=(\hat{x}_1,\dots, \hat{x}_N)$, such that the jet of $F_t$ is equal to $\gamma^t$;
\item Approximate the homotopy from (ii) with a family of parametrized automorphisms on a large compact set $L_1 \times K_1 \subset W \times X$;
\item repeat the above steps inductively on larger and larger compact sets $L_j \times K_j \subset W \times X$, $j \in \N$.
\end{enumerate}
This results in countably many parametrized automorphisms and we must ensure that their composition converges on $W \times X$. A necessary condition for convergence is that the holomorphic maps from (ii) approach the identity as the compacts $L_j \times K_j \subset W \times X$ become larger (i.e. at each induction step). Correspondingly, we must ensure that at each induction step (with the exception of the initial one), for each $t$, the jets from the isotopy in (i) be close to the jet of the identity map on  $L_j \times K_j $. We point out that we just need to assume for the parametrized family of jets to be null-homotopic and not homotopic to the jet of the identity as in (i), this is equivalent as $Y$ is path-connected when $X$ is.

This is the strategy followed in \cite{K-R} for the pointwise interpolation case (i.e. $k=0$). When $k\geq 1$, a new significant difficulty arises in this process, namely in step (ii). We explain this in detail in the remainder of this section. The rest of the paper is organized as follows: in Section 2, we lay out the notation and define the relevant space of jets and prove its ellipticity. In Section 3, we handle step (ii), and reduce the rest of the problem to a technical construction whose proof is the object of Section 4. 

Suppose that the given homotopy $\gamma^t:W\to Y$ is such that for each fixed $t$, the jet $\gamma^t:W \to Y$ is close to the jet of the identity on a compact set $L\subset W$.
Given $K \subset X$, we wish to find a homotopy of injective holomorphic maps which are close to the identity on $L\times K$.
For simplicity assume that $N=1$ and that $\gamma^t(w)$ fixes the point $\hat{x}$ for all $(t,w) \in [0,1] \times W$.
For fixed values of $t$ and $w$, the jet $\gamma^t(w)$ can be represented by a polynomial (in $z \in X$) in a local chart.
If we restrict ourselves to a small enough compact $L \subset W$, this polynomial can be chosen to be holomorphic in $w \in L$ and smooth in $t \in [0,1]$.
According to the proof of \cite[Theorem 1]{Ugolini}, on this local chart $U\subset X$ there exist locally defined vector fields $\{V_1,\dots, V_M\}\subset \VF (U)$ and holomorphic functions $\{f_1,\dots, f_M\}\subset \Olo (L)$ such that the jet of $\phi_{V_1}^{tf_1(w)} \circ \dots \circ \phi_{V_M}^{tf_M(w)}$ at $\hat{x}$ is $\gamma^{t,w}$, where $\phi_V^s$ denotes the flow of the vector field $V$ at time $s\in \C$.
Choosing $U$ to be Runge and using the density property one can replace the locally defined vector fields with globally defined complete ones $\{W_1,\dots, W_M\}\subset \CVF (X)$ in such a way that the jet of $\phi_{W_1}^{tf_1(w)} \circ \dots \circ \phi_{W_M}^{tf_M(w)}$ at $\hat{x}$ approximates $\gamma^{t,w}$ for $w \in L$. If we further require this composition to be close to the identity on a large compact $K \subset X$,
it will be sufficient that the functions $f_i$, $i=1,\dots,M$ are sufficiently close to zero for each $w \in L$. Unfortunately, the above construction fails to produce functions with this property already at the level of $1$-jets for the following reason.

We can identify a parametrized $1$-jet fixing a point with a map $G:W \to \GL_n(\C)$. It is easy to reduce to the case $G:W \to \SL_n(\C)$. In the proof of \cite[Theorem 1]{Ugolini} the author uses the following solution to the Vaserstein problem, proved by Ivarsson and Kutzschebauch in a spectacular application of Oka theory:

\begin{theorem}[Ivarsson and Kutzschebauch \cite{Ivarsson}] \label{vaser}
Let $W$ be a finite dimensional reduced Stein space and $G:W \rightarrow SL_n(\C)$ be a
nullhomotopic holomorphic mapping. Then there exist an integer $M \in \N$ and holomorphic mappings 
\[
	G_1,\dots, G_M: W \rightarrow \C^{n(n-1)/2}
\]
such that $G$ can be written as the finite product of upper and lower diagonal unipotent matrices with entries in $\hol(W)$:
\[
	G(w)=
\left( \begin{array}{ccc}
	1 & 0 \\
	G_1(w) & 1 \end{array} \right)
\left( \begin{array}{ccc}
	1 & G_2(w) \\
	0 & 1 \end{array} \right) 
	\dots
\left( \begin{array}{ccc}
	1 & G_M(w) \\
	0 & 1 \end{array} \right).
\]
\end{theorem}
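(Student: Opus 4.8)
The plan is to translate the algebraic factorization into the existence of a holomorphic section of a fibration over $W$, and then to pass from a \emph{continuous} section to a \emph{holomorphic} one via an Oka principle; the only genuinely topological input will be Vaserstein's continuous factorization theorem.

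Concretely, I would fix an integer $M$ and consider the holomorphic multiplication map
\[
\mu\colon \big(\C^{n(n-1)/2}\big)^{M}\longrightarrow \SL_n(\C),\qquad
(A_1,\dots,A_M)\longmapsto E^{-}(A_1)\,E^{+}(A_2)\,E^{-}(A_3)\cdots,
\]
where $E^{-}(A)$, $E^{+}(A)$ denote the lower, resp. upper, unipotent triangular matrices with off-diagonal block $A$. Since $\SL_n(\C)$ is boundedly generated by elementary matrices, $\mu$ is surjective as soon as $M$ is large enough. Then the pullback space $Z:=W\times_{\SL_n(\C)}\big(\C^{n(n-1)/2}\big)^{M}$, formed from $G$ and $\mu$, projects onto $W$, and a holomorphic factorization of $G$ of length $M$ is exactly a holomorphic section of the projection $p\colon Z\to W$. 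Observe that each elementary factor $E^{\pm}(A)$ is joined to the identity by $t\mapsto E^{\pm}(tA)$, so every element in the image of $\mu$ is null-homotopic: this is why the hypothesis is necessary. Conversely, Vaserstein's topological factorization theorem states that a null-homotopic continuous map from a finite-dimensional space into $\SL_n(\C)$ is a continuous product of elementary matrices, which provides a continuous section of $p$. The problem is thereby reduced to upgrading this continuous section to a holomorphic one.

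The hard part will be that $\mu$ is \emph{not} a submersion: its differential drops rank along proper analytic subsets, so $p\colon Z\to W$ is not a holomorphic fibre bundle and Gromov's Oka principle cannot be applied off the shelf. The technical core, which I expect to be by far the most delicate step, is to exhibit $\mu$ as a \emph{stratified elliptic submersion}: one stratifies $\SL_n(\C)$ into finitely many smooth locally closed subvarieties — cut out by the vanishing of the minors and entries that govern which sequence of elementary column operations reduces a given matrix to the identity — such that over each stratum $\mu$ restricts to a holomorphic fibre bundle whose fibre is a complex homogeneous manifold assembled from copies of $\SL_k(\C)$, of $\C^{k}\setminus\{0\}$, and of affine spaces, and hence carries a dominating (fibrewise) spray. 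Building these sprays and checking their compatibility along the closure relations of the stratification, so that the Oka principle for sections of stratified elliptic submersions over Stein spaces (as in \cite{F}) becomes applicable, is what must be done. In practice I would organize this as an induction on $n$: apply the section principle to reduce the last column of $G$ to the standard basis vector by holomorphic elementary row operations, thereby replacing $G\colon W\to\SL_n(\C)$ by a map $W\to\SL_{n-1}(\C)$ retaining the null-homotopy hypothesis, and treat the base case $n=2$ directly.

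Granting the stratified ellipticity, the Oka principle converts the continuous section of $p\colon Z\to W$ coming from Vaserstein's theorem into a holomorphic one, i.e.\ holomorphic maps $G_1,\dots,G_M\colon W\to\C^{n(n-1)/2}$ with $G(w)=E^{-}(G_1(w))\,E^{+}(G_2(w))\cdots$ for all $w\in W$. Relabelling the blocks as in the statement yields the asserted factorization of $G$ into a finite product of upper and lower unipotent matrices with entries in $\hol(W)$.
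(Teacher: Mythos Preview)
The paper does not prove this theorem; it is quoted as a result of Ivarsson and Kutzschebauch \cite{Ivarsson} and used as a tool (and, more importantly, as a cautionary example) in Section~\ref{sec:intro}. There is therefore no proof in the paper to compare your proposal against.

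That said, your outline is an accurate high-level summary of the Ivarsson--Kutzschebauch argument: Vaserstein's continuous factorization supplies a topological section, and the main work is to show that the multiplication map from tuples of elementary matrices to $\SL_n(\C)$ is a stratified elliptic submersion so that the Oka--Grauert--Gromov principle for sections over Stein spaces applies. The paper's discussion in the Introduction already records the key difficulty you identify, namely that the map $\psi_M$ (your $\mu$) fails to be a submersion along the set $S_M=\{z_1=\dots=z_{M-1}=0\}$; in fact the paper uses this precisely to explain why the factorization cannot be chosen to vanish where $G$ equals the identity, which is the obstacle motivating the rest of the paper. Your inductive reduction on $n$ via projection to the last row and the resulting fibration over $\C^n\setminus\{0\}$ is also the route taken in \cite{Ivarsson}, and the paper sketches exactly this for $n=2$ with the projection $\pi:\SL_2(\C)\to\C^2\setminus\{0\}$. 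So your proposal matches the original proof rather than anything in the present paper.
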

Consider the following statement: if  $G(w_0)=Id$ holds for some $w_0 \in W$, then $G_1(w_0)=\dots=G_M(w_0)=0$. In the above setting, if the jet $\gamma^t$ happens to be equal to the identity for some $w_0$, this statement would imply that the corresponding functions $f_i$ from the described construction should evaluate to zero at $w_0$, and hence the approximation would be close to the identity on 
$K\subset X$ for $w \in W$ close to $w_0$, as desired. Unfortunately, the naive statement just considered is false and we now provide a counterexample.

Suppose $W$ is an open disc in $\C$ of radius $1$ and center $1\in\C$. Since $W$ is contractible, any map $W\to SL_n(\C)$ is nullhomotopic. For $n=2$, let
\[
G(w)=\left( \begin{array}{ccc}
	\frac{1}{w} & 0 \\
	0 & w \end{array} \right)
\]
and suppose it can be written as a product of upper and lower diagonal unipotent matrix functions which are the identity when $w=1$. By a simple induction, this implies that the term in position $(2,2)$ is always of the form $1+(w-1)^2 f(w)$ for an $f\in \Olo(W)$ whose Laurent polynomial around $1$ does not include summands with negative exponent. As this term also needs to be equal to $w$ for all $w \in W$, we reach a contradiction.

This fact is deeper than it appears and a more detailed account of this phenomenon can be found in the proof of Theorem \ref{vaser}. Here we illustrate it for the case $n=2$. For a fixed $M \in \N$, consider the map

\begin{align*}
\psi_M:\C^M &\to SL_2(\C) \\
\psi_M(z_1,\dots,z_M)&=\left( \begin{array}{ccc}
	1 & 0 \\
	z_1& 1 \end{array} \right)
\left( \begin{array}{ccc}
	1 & z_2 \\
	0 & 1 \end{array} \right) 
	\dots
\left( \begin{array}{ccc}
	1 & z_M \\
	0 & 1 \end{array} \right).
   \end{align*}
Theorem \ref{vaser} implies the existence of a holomorphic lift in the following diagram:

\[
\begin{tikzcd}[row sep=1cm, column sep=1.5cm]
 & \C^M \arrow{d}{\psi_M}   \\
 W \arrow{r}{G} \arrow{ru} & SL_2(\C)
\end{tikzcd}
\]
The existence of a continuous lift was proven by Vaserstein \cite{Vaserstein}. To find a holomorphic lift, the authors use the Oka-Grauert-Gromov principle for sections of holomorphic submersions coming from the diagram

\[
\begin{tikzcd}[row sep=1cm, column sep=1.5cm]
 & \C^M \arrow{d}{\pi \circ \psi_M}   \\
 W \arrow{r}{\pi \circ G} \arrow{ru} & \C^2 \setminus \{0\}
\end{tikzcd}
\]
where $\pi: SL_2(\C) \to \C^2 \setminus \{0\}$ is given by projection to the last row. This choice was made to simplify the discussion of the fibers of the submersion, nonetheless the map $\pi \circ \psi_M$ is a submersion only outside the set $$S_M=\{(z_1,\dots,z_M) \in \C^M: z_1=\dots=z_{M-1}=0\}.$$ 

The main consequence of this fact is that Theorem \ref{vaser} ensures the existence of functions $G_i:W \to \C,\ i=1,\dots,M$ which are never all zero for the same value $w_0 \in W$. Therefore, the previously discussed approach for step (ii) is not viable, and a more elaborate procedure is required. 

\section{Notation and set up}

Let $X$ be a complex manifold of dimension $n$ and fix $k \in \N$. We now give precise definitions of the objects that were discussed in the previous section.
\begin{definition}
Let $F,G:U \subset X \to X$ be representatives for holomorphic germs at $p \in U$. We say that $F$ and $G$ have the same $k$-jet at $p$ if their Taylor expansion in some local chart about $p$ agrees up to order $k$. This defines an equivalence class which we call a $k$-jet and denote by $[F]_p$. We say that the jet is nondegenerate if its linear part (the Jacobian of any representative) has nonzero determinant.
\end{definition}
Thus in a local chart, a $k$-jet can be uniquely represented by a polynomial mapping of total degree (i.e. the maximal degree of its $n$ components) at most $k$. %Real Submanifolds in Complex Space and Their Mappings (PMS-47)

The set of all nondegenerate $k$-jets at a point $p \in X$ will be denoted by $J_{p,\ast} (X)$. Observe that this is a complex manifold. Since a jet $\gamma \in J_{p,\ast} (X)$ is an equivalence class of germs, $\gamma(p) \in X$ is well-defined and we call this the \textit{image} of $\gamma$ at $p$. Furthermore, the map
\begin{align*}
  \pi:J_{p,\ast} (X) &\to X \\
 \gamma &\mapsto \gamma(p)
\end{align*}
is surjective and holomorphic.
For $q \in X$ we can think of $\pi^{-1}(q) =: J_{p,q} (X)$ as the set of non-degenerate $k$-jets at $p$ whose image is $q$. For convenience and unless noted otherwise we will use the word \textit{jet} for \textit{nondegenerate $k$-jet}. %Real Submanifolds in Complex Space and Their Mappings (PMS-47)

As we are interested in jet interpolation by automorphisms at more than one point, let us define the relevant spaces.
Fix $N \in \N$ distinct points $\{ \hat{x}_i \}_{i=1}^N \subset X$. We interpret this $N$-tuple as a point $\hat{x} \in X^N \setminus \Delta$, where 
\[
\Delta= \bigcup_{1\leq i<j \leq N} \left\{(z_1,\dots,z_N) \in X^N : z_i=z_j \right\}. 
\]
If we apply $\pi$ coordinate by coordinate we obtain a map (which we still denote by $\pi$) from $J_{\hat{x}_1,\ast} (X) \times \dots \times J_{\hat{x}_N,\ast} (X)$ to $X^N$.
Now let
\[
Y:=J_{\hat{x}_1,\ast} (X) \times \dots \times J_{\hat{x}_N,\ast} (X) \setminus \pi^{-1} (\Delta).
\]
be the complex manifold representing all possible collections of nondegenerate $k$-jets at the points $\{ \hat{x}_i \}_{i=1}^N$ such that their respective images are distinct.
If $U$ is an open set containing all of the points $\{ \hat{x}_i \}_{i=1}^N$ and $F:U \to X$ is an injective holomorphic map, we denote by $[F]_{\hat{x}}$ the $N$-tuple of jets $([F]_{\hat{x}_1}, \dots, [F]_{\hat{x}_N}) \in Y$.

Given a jet of the form $\gamma = [F]_{\hat{x}} \in Y$, we denote by $(\gamma)^{-1}$ the jet $[F^{-1}]_{F(\hat{x})}$ and observe that  $(\gamma)^{-1} \in Y$ if $\gamma(\hat{x})=\hat{x}$.

If some metric compatible with the manifold structure on $X$ is given, the space $J_{p,\ast}(X)$ inherits a metric, and it induces the natural distance on $X^N\setminus\Delta$ defined by taking the maximum distance of each coordinate projection. Therefore to $Y$ is associated a distance function $d$. It follows from the Cauchy estimates that uniform convergence on compacts of $X$ implies convergence in $Y$ with respect to this metric.

Let $W$ be a complex manifold and $\gamma:W \to Y$ a holomorphic map. We are interested in finding a \textit{holomorphic} map $F:W \to \Aut(X)$ such that $[F^w]_{\hat{x}} = \gamma^w$ for all $w \in W$.
As $\Aut(X)$ is not a complex manifold, we define $F:W \to \Aut(X)$ to be holomorphic if $F^w(x)$ is holomorphic as a map from $W \times X$ to $X$. We denote the space of all such maps with $\Aut_{W}(X)$ and observe that it can be seen as a subgroup of $\Aut(W \times X)$.

It is clear that a necessary condition for the existence of such a map is that $\Aut(X)$ is \textit{large}: more precisely we will require $X$ to be a Stein manifold with the density property, which we now define. Let $\VF(X)$ be the Lie algebra of holomorphic vector fields on $X$ (holomorphic sections of $T^{1,0}X$). In this paper \textit{vector field} means \textit{holomorphic vector field} in this sense. Recall that a vector field on a manifold is called \textit{complete} if at every point $x$ the solution $\phi_t(x)$ to the flow equation starting at $x$ is defined for all $t\in \C$. 

\begin{definition}\label{def:DP} Let $X$ be a complex manifold and $\mathfrak{g} \subset \VF(X)$ be a Lie subalgebra of vector fields on $X$. We say that $\mathfrak{g}$ has the density property if the subalgebra of $\mathfrak{g}$ generated by complete vector fields is dense in $\mathfrak{g}$ with respect to the uniform topology on compact sets. We say that $X$ has the density property if $ \VF(X)$ does.
\end{definition}

The density property by itself does not imply that $\Aut(X)$ is large, as one can deduce by considering a compact manifold. However under the additional assumption that $X$ is Stein and of dimension at least $2$, the density property implies that $\Aut(X)$ is infinite dimensional (see e.g. \cite{Varolin1}). An important feature of Stein manifolds with the density property is that locally defined holomorphic maps $U\to X$ can be approximated uniformly on compacts by automorphisms: this is known as the Andersén-Lempert theorem. The following is the parametric version proven in \cite[Theorem 2.2]{K-R}, which we will use in its full generality. 
\begin{proposition}
\label{AL}
Let $W$ be a Stein manifold and $X$ a Stein manifold with the density property. 
Let $U\subset W\times X$ be an open set and $F_t:U\to W\times X$ be a  smooth  homotopy of injective holomorphic maps acting as the identity on the $W$ coordinate and with $F_0$ being the inclusion map.
Suppose $K\subset U$ is a compact set such that $F_t(K)$ is $\hol(W\times X)$-convex for each $t\in \I$.  Then there exists a neighborhood $V$ of $K$ such that for all $t\in\I$,
$F_t$ can be approximated uniformly on compacts of $V$ (with respect to any distance function on $X$) by automorphisms $\alpha_t\in\Aut_{W}(X)$ which depend smoothly on $t$, and moreover we can choose $\alpha_0=id$.
\end{proposition}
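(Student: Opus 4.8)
The plan is to run the classical Andersén--Lempert scheme with the time parameter $t$ and the Stein parameter $w$ carried along at every step. First I would pass from the isotopy to its infinitesimal generator: since each $F_t$ is injective holomorphic, acts as the identity on the $W$-factor, and $F_0$ is the inclusion, the $t$-derivative defines, on a neighborhood of $F_t(K)$ in $W\times X$, a ``vertical'' holomorphic vector field $V_t$ (no $\partial/\partial w$ component) by $\tfrac{\partial}{\partial t}F_t(w,x)=V_t(F_t(w,x))$, i.e.\ $V_t=\bigl(\tfrac{\partial}{\partial s}F_s\big|_{s=t}\bigr)\circ F_t^{-1}$. Thus $(V_t)_{t\in\I}$ is a smooth family of time-dependent fields that are, for each fixed $(t,w)$, holomorphic vector fields on a piece of $X$, and $F_t$ is recovered as the non-autonomous flow of $V_t$ starting from the inclusion.

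Next I would discretize time and transfer the problem to global complete fields. Because $F_t(K)$ is $\hol(W\times X)$-convex for every $t$, each such compact has a Runge neighborhood in $W\times X$; by continuity of $t\mapsto F_t(K)$ and compactness of $\I$ one gets a partition $0=t_0<\dots<t_m=1$ and Runge neighborhoods $\Omega_j\subset W\times X$ of $F_{t_{j-1}}(K)$ with $F_s(K)\subset\Omega_j$ for $s\in[t_{j-1},t_j]$. On $[t_{j-1},t_j]$ I replace the non-autonomous flow of $V_t$ by the flow of the frozen field $V_{t_{j-1}}$ run for time $t-t_{j-1}$ (an Euler step); a Gronwall-type estimate on a fixed neighborhood of $K$ shows that the composition of these frozen flows approximates $F_t$ uniformly near $K$, with error tending to $0$ as the partition is refined, and depending smoothly on $t$. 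Each frozen field $V_{t_{j-1}}$ is vertical and holomorphic on the Runge set $\Omega_j$; invoking the parametric density property of $X$ I approximate it, uniformly near $F_{t_{j-1}}(K)$, by a vertical field $\widetilde V_j=\sum_\ell g_\ell(w)W_\ell+(\text{vertical brackets of the }g_\ell W_\ell)$ with $g_\ell\in\hol(W)$ and $W_\ell\in\CVF(X)$; note that for vertical fields with $w$-dependent coefficients $[gW_i,hW_j]=gh\,[W_i,W_j]$, so the bracket structure reduces to that of $\VF(X)$, and the time-$\tau$ flow of $gW_i$ is $(w,x)\mapsto(w,\phi_{W_i}^{\tau g(w)}(x))\in\Aut_W(X)$. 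Finally, the time-$\tau$ flow of such a Lie combination is approximated by the standard Euler/commutator lemma (flow of a bracket by an iterated commutator of flows, flow of a sum by a Trotter-type composition) by a finite composition of maps of the above form, all lying in $\Aut_W(X)$. Composing over $j=1,\dots,m$ and letting the last flow run for the appropriate fraction of the step yields a smooth family $\alpha_t\in\Aut_W(X)$ approximating $F_t$ on a neighborhood $V$ of $K$; at $t=0$ every flow in the composition is run for time $0$, so $\alpha_0=\mathrm{id}$, as required.

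The main obstacle is the parametric density property together with the uniformity in $t$. Definition~\ref{def:DP} is a non-parametric assertion about $\VF(X)$; what is needed is that, for $W$ Stein, the approximating Lie combination can be chosen with coefficients holomorphic in $w$ and smooth in $t$, uniformly on compacts of $W\times X$. For the $w$-dependence I would argue that, over a compact $L\Subset W$, the family $\{V_{t_{j-1}}(w,\cdot):w\in L\}$ is compact in the relevant topology, so by openness of the approximation condition only finitely many complete fields $W_1,\dots,W_p\in\CVF(X)$ are needed in total; expressing the approximation locally in $w$ as a Lie combination of these, the passage to globally defined holomorphic coefficients $g_\ell$ is a linear problem with holomorphic parameter, solvable on the Stein manifold $W$ by Cartan's Theorem~B together with patching in $w$. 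The $t$-dependence is then handled by the interpolation already built into the Euler scheme, once the $\Omega_j$ are chosen uniformly over each subinterval as above, and one must keep all Gronwall estimates on a single neighborhood of $K$ so that the resulting $V$ is independent of $t$. These are precisely the technical points at the heart of \cite[Theorem 2.2]{K-R}.
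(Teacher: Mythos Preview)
The paper does not give its own proof of this proposition: it is quoted verbatim from \cite[Theorem 2.2]{K-R} and used as a black box. There is therefore nothing in the present paper to compare your argument against.

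That said, your outline is exactly the parametric Anders\'en--Lempert scheme that underlies the cited result: pass to the non-autonomous generator $V_t=(\partial_s F_s|_{s=t})\circ F_t^{-1}$, subdivide $\I$ so that each $F_t(K)$ sits in a common Runge neighborhood on every subinterval, freeze the field and approximate it on that Runge set by a Lie combination of complete vertical fields with $\hol(W)$-coefficients using the density property, and then realize the flow of such a combination by Euler/commutator compositions of flows in $\Aut_W(X)$. The observation that for vertical fields one has $[gW_i,hW_j]=gh\,[W_i,W_j]$, so the $w$-dependence enters only through scalar coefficients and the flow of $gW_i$ is $(w,x)\mapsto(w,\phi_{W_i}^{\tau g(w)}(x))$, is precisely the mechanism that makes the parametric version go through. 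Your identification of the delicate point---getting the approximating Lie combination to depend holomorphically on $w$ and smoothly on $t$, uniformly on compacts---is accurate, and your proposed resolution (finitely many $W_\ell$ suffice by compactness, then solve a linear problem in $w$ on the Stein $W$) matches the argument in \cite{K-R}. In short, your sketch is correct and is essentially the proof given in the reference the paper cites.
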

Let us return to our setting. Given $\gamma=(\gamma_1, \dots, \gamma_N) \in Y$, any $V \in \VF(X)$ defines a flow $\phi_V^t$ in a neighborhood of $\{ \pi(\gamma_i) \}_{i=1}^N$ for small enough values of $t$. Hence the jet $[\phi_V^t \circ \gamma]_{\hat{x}} \in Y$ is well defined (for small $t$).
Differentiating with respect to $t$, we obtain $\tilde{V} \in \VF(Y)$ such that $\phi_{\tilde{V}}^t (\gamma)= [\phi_V^t \circ \gamma]_{\hat{x}}$ for all $\gamma\in Y$: we call $\tilde{V}$ \textit{the lift} of $V$. 
We observe that if $V$ is complete, then so is $\tilde{V}$. We denote by $\CVF (X)$ the set of complete vector fields on $X$.
The set $\widetilde{\VF(X)} = \{ \tilde{V} \in \VF(Y) : V \in \VF(X) \}$ is a Lie subalgebra of $\VF(Y)$ and we note that if $X$ has the density property, so does $\widetilde{\VF(X)}$. Similarly, an automorphism $\alpha$ of $X$ lifts to an automorphism $\tilde\alpha$ of $Y$. 

We now prove that complete vector fields on $X$ can be lifted to span the tangent space of $Y$.

\begin{lemma}\label{lem:elliptic}
Let $X$ be a Stein manifold with the density property and let $\gamma \in Y$.
Then there exist $M= \dim Y \in \N$ complete vector fields $\{ \theta_i \}_{i=1}^M \subset \CVF(X)$ such that $\{\tilde{\theta}_i(\gamma)\}_{i=1}^M$ is a basis for the tangent space of $Y$ at the point $\gamma$.
In particular the map 
 \begin{align*}
 \C^M &\to Y \\
 (t_1,\dots,t_M) &\mapsto \phi_{\tilde{\theta}_1}^{t_1} \circ \dots \circ \phi_{\tilde{\theta}_M}^{t_M}(\gamma)
\end{align*}
is a biholomorphism from a neighborhood of $0$ to a neighborhood of $\gamma$.
\end{lemma}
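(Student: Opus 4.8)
The plan is to analyse the linear evaluation map $\operatorname{ev}_\gamma\colon\widetilde{\VF(X)}\to T_\gamma Y$, $\tilde V\mapsto\tilde V(\gamma)$: first show it is surjective, then upgrade this so that already the lifts of \emph{complete} fields span $T_\gamma Y$, and finally read off the biholomorphism statement from the inverse function theorem.

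\emph{Surjectivity of $\operatorname{ev}_\gamma$.} Since $\pi^{-1}(\Delta)$ is closed in $\prod_{i=1}^N J_{\hat x_i,\ast}(X)$ and $\gamma\notin\pi^{-1}(\Delta)$, we have $T_\gamma Y=\bigoplus_{i=1}^N T_{\gamma_i}J_{\hat x_i,\ast}(X)$. Put $q_i:=\pi(\gamma_i)$; these are pairwise distinct because $\gamma\in Y$. Choosing a representative local biholomorphism $F_i$ of $\gamma_i$ (so $F_i(\hat x_i)=q_i$) one computes $\tfrac{d}{dt}\big|_{0}[\phi_V^t\circ F_i]_{\hat x_i}$ to be the $k$-jet at $\hat x_i$ of $x\mapsto V(F_i(x))$, which, $F_i$ being a local biholomorphism, is equivalent data to the $k$-jet of $V$ at $q_i$. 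So $T_{\gamma_i}J_{\hat x_i,\ast}(X)$ is naturally the space of $k$-jets of holomorphic vector fields at $q_i$, and under these identifications $\operatorname{ev}_\gamma$ becomes $V\mapsto(j^k_{q_1}V,\dots,j^k_{q_N}V)$. Surjectivity of the latter is exactly interpolation of prescribed $k$-jets of sections of $TX$ at the finitely many distinct points $q_i$, which holds on the Stein manifold $X$ by Cartan's Theorem B.

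\emph{Complete fields already suffice --- the main point.} The density property of $\widetilde{\VF(X)}$ only provides density of the Lie subalgebra \emph{generated} by complete fields, so $\operatorname{ev}_\gamma$ applied to it produces a priori only a dense subspace of $T_\gamma Y$ assembled from iterated brackets; the crux is that those brackets contribute nothing new after evaluation. For $\gamma'\in Y$ put $E_{\gamma'}:=\Span\{\tilde\theta(\gamma'):\theta\in\CVF(X)\}$, a finite-dimensional, hence closed, linear subspace of $T_{\gamma'}Y$. I claim that $\tilde V(\gamma')\in E_{\gamma'}$ for every $\gamma'\in Y$ and every $V$ in the Lie algebra $\mathfrak g_0$ generated by $\CVF(X)$, and I would prove it by induction on bracket-length, the length-one case being immediate. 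For the step it suffices to treat $V=[V',\theta]$ with $\theta\in\CVF(X)$ and $V'$ of smaller length: using that lifting intertwines push-forward by the automorphism $\phi_\theta^t$ with push-forward by $\phi_{\tilde\theta}^t$, that $(\phi_\theta^t)_\ast$ maps $\CVF(X)$ and $\mathfrak g_0$ into themselves preserving bracket-length, and the induction hypothesis applied at the point $\phi_{\tilde\theta}^{-t}(\gamma')$, one gets $(\phi_{\tilde\theta}^t)_\ast\tilde V'(\gamma')\in E_{\gamma'}$ for all small $t$; differentiating at $t=0$ and using that $E_{\gamma'}$ is closed gives $\tilde V(\gamma')=\pm[\tilde V',\tilde\theta](\gamma')\in E_{\gamma'}$. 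Since the Lie algebra generated by $\{\tilde\theta:\theta\in\CVF(X)\}$ is dense in $\widetilde{\VF(X)}$ (this is the density property of $\widetilde{\VF(X)}$), $\operatorname{ev}_\gamma$ is continuous, and $E_\gamma$ is closed, we conclude $\operatorname{ev}_\gamma(\widetilde{\VF(X)})\subseteq E_\gamma$; with the surjectivity above this forces $E_\gamma=T_\gamma Y$. In particular one can choose $\theta_1,\dots,\theta_M\in\CVF(X)$, $M=\dim Y$, with $\{\tilde\theta_i(\gamma)\}_{i=1}^M$ a basis of $T_\gamma Y$.

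\emph{Conclusion.} The flows $\phi_{\tilde\theta_i}^{t_i}$ are globally defined on $Y$ because the $\tilde\theta_i$ are complete (being lifts of complete fields), so the map $(t_1,\dots,t_M)\mapsto\phi_{\tilde\theta_1}^{t_1}\circ\dots\circ\phi_{\tilde\theta_M}^{t_M}(\gamma)$ is a well-defined holomorphic map $\C^M\to Y$ sending $0$ to $\gamma$; its differential at $0$ sends the $i$-th basis vector of $\C^M$ to $\tilde\theta_i(\gamma)$ (the other flows sitting at time zero), hence is a linear isomorphism onto $T_\gamma Y$, and the inverse function theorem yields the asserted local biholomorphism. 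The only genuine obstacle here is the bracket-reduction argument of the third paragraph; the identification of $T_\gamma Y$, the use of Cartan's Theorem B, and the final application of the inverse function theorem are routine.
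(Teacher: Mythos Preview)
Your argument is correct, and it follows a genuinely different route from the paper's. The paper first reduces to the special point $\gamma=[Id]_{\hat x}$ by invoking Varolin's jet--interpolation theorem (finding $F\in\Aut(X)$ with $[F\circ\gamma]_{\hat x}=[Id]_{\hat x}$ and then pushing forward), and then works in Runge coordinate charts around the $\hat x_i$ with the explicit monomial fields $z^I\partial/\partial z_j$, whose lifts visibly span $T_{[Id]}Y$; the density property is used at the end to replace these by complete fields. You instead stay at an arbitrary $\gamma$: Cartan's Theorem~B gives surjectivity of $\operatorname{ev}_\gamma$ on all of $\widetilde{\VF(X)}$, and your bracket--reduction step (using that $(\phi_\theta^t)_\ast$ preserves $\CVF(X)$ and intertwines with the lift) shows that after evaluation at $\gamma$ the iterated brackets contribute nothing beyond the linear span $E_\gamma$ of lifts of complete fields. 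Your approach buys two things: it dispenses with the external input of Varolin's theorem, and it makes explicit the step from ``Lie algebra generated by complete fields is dense'' to ``complete fields span pointwise'', a passage the paper's phrase ``approximate each $V_{I,j}$ by a sum of complete vector fields'' arguably elides (the density property yields density of the \emph{generated} Lie algebra, not of the linear span). The paper's approach, on the other hand, is more concrete and stays closer to the familiar Anders\'en--Lempert toolkit in coordinates. One minor remark: in your inductive step you in fact get $(\phi_{\tilde\theta}^t)_\ast\tilde V'(\gamma')\in E_{\gamma'}$ for \emph{all} $t\in\C$ (since $\theta$ is complete), not just small $t$; and the induction hypothesis can be applied directly at $\gamma'$ to the field $(\phi_\theta^t)_\ast V'$ (which has the same bracket-length as $V'$), rather than at the shifted point---both routes work.
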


\begin{proof}
It is sufficient to prove the lemma for $\gamma = [Id]_{\hat{x}}$.
Indeed, Varolin proved in \cite{VarolinII} that there exists $F \in \Aut(X)$ such that $[F\circ \gamma]_{\hat{x}}=[Id]_{\hat{x}}$
.
If the vector fields $\{\tilde{\theta}_i\}_{i=1}^M$ span the tangent space to $Y$ at $[Id]_{\hat{x}}$, then $\{\widetilde{F^*\theta}_i\}_{i=1}^M$ span $T_\gamma Y$.

We first claim that the conclusion is true for $N=1$ and $X=\C^n, \ n>1$.
To see this, suppose $\hat{x}=(0,\dots,0)$ and consider the vector fields $V_{I,j}=z^I \frac{\partial}{\partial z_j}$ where $I$ runs through the multi-indexes of degree less or equal than $k$ and $j=1,\dots, n$.
Not all of them are complete, but they do span the tangent space of $Y$ at $[Id]_{\hat{x}}$.
As $X$ has the density property, we can approximate each $V_{I,j}$ by a sum of complete vector fields.
For a good enough approximation, this new collection must also span the tangent space of $Y$ at $[Id]_{\hat{x}}$. 
The claim is proved by picking out a basis from this generating set.

A similar proof works for arbitrary $X$ (Stein with the density property): let $U$ be a Runge coordinate neighborhood of $\hat{x}$ such that $\hat{x}$ corresponds to $0 \in \C^n$ under the chart map. On $U$ we consider the pullback of the vector fields $V_{I,j}$ above. As $U$ is Runge and $X$ has the density property, we conclude as above by approximating these pulled-back fields  by sums of complete vector fields.

Let now $X$ be as above and $N>1$. We choose coordinate neighborhoods $U_i$ of $\hat{x}_i$, $i=1,\dots N$ such that $U = U_1 \cup \dots \cup U_N$ is Runge in $X$ and each $\hat{x}_j$ is mapped to zero under the appropriate coordinate chart.
For each $i=1, \dots, N$, we pull back $V_{I,j}$ on $U_i$ and extend it to the zero field on the other coordinate neighborhoods in order to obtain a generating set for $Y$ at $[Id]_{\hat{x}}$, and proceed as above to obtain complete vector fields. 
\end{proof}
The collection $\{ \tilde{\theta_i} \}_{i=1}^M\subset \CVF(Y)$ spans $T_\gamma Y$ for all $\gamma$ outside of an analytic set $A\subset Y$. By a procedure involving a step-by-step lowering of the (finite) dimension of $A$ (see the second part of the proof of \cite[Lemma 3.2]{K-R}, or \cite[Thm. 4]{KK-Zeit}), 
%we use that automorphisms lift as well
implies that the collection  $\{ \theta_i \}\subset \CVF(X)_{i=1}^M$ can be enlarged to a finite collection $\{ \theta_i \}\subset \CVF(X)$ such that $\{\tilde{\theta}_i\}$ spans the tangent space of $Y$ at \textit{every point} in $Y$. In particular, this proves that $Y$ is an \textit{elliptic manifold} in the sense of Gromov (though not necessarily Stein) and hence an \textit{Oka-Forstneri\v{c} manifold}. These notions are more general than we require here, but for our purposes, it will suffice to record that an Oka-Forstneri\v c manifold $Y$ satisfies the following h-principle:
given a homotopy $f_t:W\to Y\ (t\in\I)$, where $W$ is Stein and $f_0,f_1$  are holomorphic, there exists a new homotopy $g_t:W\to Y$ which is smooth on $t$, holomorphic on $W$ for all $t\in \I$ and $f_0=g_0,f_1=g_1$. We call such a special homotopy a \textit{smooth isotopy of holomorphic maps} (or jets, in the case that  $Y$ is as defined previously in this section). 
To avoid confusion, we refrain from talking about smooth isotopies of maps $W\to \Aut(X)$ depending (smoothly) on a variable $t\in [0,1]$, which we prefer to call families of parametrized automorphisms.
%follows easily from POP; see KRP16.

\section{First local approximation}
We begin by showing (Lemma \ref{lem:stepzero} below) that we can approximately achieve the conclusion of Theorem \ref{main} when the parameter lies in a compact set. We will make use of the following result from \cite{K-R}:
\begin{theorem}
\label{thm:KR}
Let $W$ be a Stein manifold and $X$ a connected Stein manifold with the density property. Let $N$ be a natural number and 
$x:W\to X^N \setminus \Delta$ be a holomorphic map, and fix $\hat{x}=(\hat{x}_1,\dots,\hat{x}_N)$. 
Then there exists a holomorphic map $\alpha:W\to \Aut(X)$, homotopic to the identity, such that $\alpha^w(\hat{x}_i)=x_i^w$ for all $i=1,\dots, N$ and all $w\in W$,
if and only if $x$ is nullhomotopic. 
\end{theorem}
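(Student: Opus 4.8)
The plan is to run the proof strategy of Section \ref{sec:intro} in the special case $k=0$, where step (ii) degenerates and the Vaserstein-type obstruction never appears. Note that for $k=0$ the space $Y$ of the previous section is exactly $X^N\setminus\Delta$ and the lift of $V\in\VF(X)$ is the diagonal field $(V,\dots,V)$. One implication requires no work: if $\alpha\in\Aut_W(X)$ is homotopic through continuous maps $W\to\Aut(X)$ to the constant family $w\mapsto\mathrm{id}_X$, then evaluating at $\hat x_1,\dots,\hat x_N$ turns this homotopy into one from $x$ to the constant map $w\mapsto\hat x$ which, since automorphisms are injective, takes values in $X^N\setminus\Delta$ at every stage; hence $x$ is nullhomotopic.

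For the converse I would first invoke Lemma \ref{lem:elliptic} and the discussion following it (specialized to $k=0$): there exist complete vector fields $\theta_1,\dots,\theta_M\in\CVF(X)$ whose diagonal lifts $\tilde\theta_1,\dots,\tilde\theta_M$ span $T_qY$ at every $q\in Y$, so $Y=X^N\setminus\Delta$ is Oka; and $Y$ is path-connected because $X$ is. Assuming $x$ nullhomotopic, I would then replace its terminal constant value by $\hat x$ (using path-connectedness) and apply the h-principle for $Y$ recalled in the previous section to deform the resulting nullhomotopy into a smooth isotopy of holomorphic maps $x_t:W\to Y$ with $x_0\equiv\hat x$, $x_1=x$.

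The step that is hard for $k\ge1$ but free here is the passage from $x_t$ to an isotopy of injective holomorphic maps near $\hat x$. Since each $\theta_i$ is complete, the flow $\phi_{\theta_i}^c$ is an \emph{automorphism} of $X$, and since the $\tilde\theta_i$ span $TY$ everywhere, the flow spray $s\mapsto\phi_{\tilde\theta_1}^{s_1}\circ\dots\circ\phi_{\tilde\theta_M}^{s_M}(q)$ is a submersion at $s=0$ with value $q$, for every $q\in Y$. Hence, over a $\hol(W)$-convex compact $L\subset W$ and for any holomorphic map $W\to Y$ that is $C^0(L)$-close to a map $q(\cdot)$ already presented as such a composition applied to $\hat x$, I can solve holomorphically over a Stein neighbourhood of $L$ — inverting this submersion near its zero section, the base $L$ being Stein — for functions $s_i:L\to\C$ realizing the given map as $\phi_{\tilde\theta_1}^{s_1(w)}\circ\dots\circ\phi_{\tilde\theta_M}^{s_M(w)}(q(w))$, with $\|s_i\|_L$ controlled by the $C^0(L)$-distance.

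To globalize over a possibly non-compact $W$, I would fix normal exhaustions $L_1\Subset L_2\Subset\cdots$ of $W$ and $K_1\Subset K_2\Subset\cdots$ of $X$, and partitions of $[0,1]$ refined as the induction proceeds, and build $\alpha$ as an infinite left-composition of flow blocks $B_r\in\Aut_W(X)$, each of the form $\phi_{\theta_1}^{s_1(\cdot)}\circ\dots\circ\phi_{\theta_M}^{s_M(\cdot)}$: at the stage governed by $L_j$ and a partition point $t_l$, prepend a block moving the current value at $\hat x$ over $L_j$ from an approximation of $x_{t_{l-1}}$ to $x_{t_l}$, chosen (by the previous paragraph, the residual isotopy being by then $C^0(L_{j-1})$-small) so small on $L_{j-1}$ that the induced correction is $\varepsilon_r$-close to the identity on $L_{j-1}\times K_{j-1}$, with $\sum_r\varepsilon_r<\infty$. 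The composition then converges in $\Aut_W(X)$ to an $\alpha$ with $\alpha^w(\hat x)=x(w)$, and concatenating the homotopies $c\mapsto\phi_{\theta_i}^c$ defining the blocks, reparametrized into $[0,1]$, exhibits $\alpha$ as homotopic to the identity. I expect the only real obstacle to be the usual Andersén--Lempert convergence bookkeeping — keeping the sets $F_r(L_{j-1}\times K_{j-1})$ inside one fixed compact, equivalently controlling $F_r$ and $F_r^{-1}$ simultaneously — which works because once the residual isotopy over $L_{j-1}$ is negligible all later blocks move $\hat x$ only slightly there. (For $k\ge1$ this step needs Proposition \ref{AL} in place of exact flow realization and must confront the Vaserstein-type obstruction of Section \ref{sec:intro}, which is what the rest of the paper handles.)
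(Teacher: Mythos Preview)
The paper does not prove this theorem: it is quoted from \cite{K-R} and used as a black box in Lemma~\ref{lem:stepzero}, so there is no in-paper proof to compare against. Your outline is, however, precisely the scheme of \cite{K-R} (and the $k=0$ specialization of the proof of Theorem~\ref{main} here): upgrade the nullhomotopy to a smooth isotopy of holomorphic maps using that $Y=X^N\setminus\Delta$ is Oka, realize short pieces of the isotopy by flow compositions of complete fields via Lemma~\ref{lem:elliptic}, and assemble along an exhaustion with telescoping errors.

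One point deserves correction. Your flow-time functions $s_i$ are obtained by inverting the spray only over a Stein neighbourhood of the compact $L_j$, so the resulting block $B_r=\phi_{\theta_1}^{s_1(\cdot)}\circ\cdots\circ\phi_{\theta_M}^{s_M(\cdot)}$ lies in $\Aut_{\mathrm{nbhd}(L_j)}(X)$, not in $\Aut_W(X)$ as you write. Passing to automorphisms parametrized by \emph{all} of $W$ requires either Oka--Weil approximation of the $s_i$ (after which each block only approximately moves $\hat{x}$ where you want it on $L_j$, and exact interpolation must be recovered in the limit) or the parametric Anders\'en--Lempert theorem, which is what \cite{K-R} actually uses; compare Proposition~\ref{prop:jetapprox} here, whose part (i) is exactly your spray inversion and whose part (ii) is the globalization step via Proposition~\ref{AL}. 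Your parenthetical claim that Proposition~\ref{AL} enters only for $k\ge 1$ is therefore not right: it (or an equivalent device) is already the mechanism that turns locally-parametrized flow blocks into elements of $\Aut_W(X)$, for every $k$ including $k=0$. The Vaserstein obstruction discussed in Section~\ref{sec:intro} concerns a \emph{different} decomposition (the triangular-matrix factorization from \cite{Ugolini}), not the spray of Lemma~\ref{lem:elliptic} that both you and the paper use.
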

In our notation, a holomorphic map $x:W\to X^N\setminus \Delta$ is nothing but a parametrized $0$-jet at the point $\hat{x}=(\hat{x}_1,\dots,\hat{x}_N)$.

\begin{lemma} \label{lem:stepzero}
Let $W$ be Stein and $X$ Stein with the density property. Fix $\hat{x}$ so that the space $Y$ of $k$-jets at $\hat{x}$ is defined as in Section 2. Let $\gamma^1: W\to Y$ be holomorphic and null-homotopic, with $\gamma^t$ denoting the homotopy from $\gamma^1$ to the constant jet $\gamma^0=[Id]_{\hat{x}}$. Let $\varepsilon>0$ and a holomorphically convex compact set $L=\hat{L}\subset W$ be given. 
Then there exists a family of parametrized automorphisms $F:[0,1] \times W \to \Aut(X)$ with $F^0=id$ such that
\[
d([F^{t,w} \circ \gamma^{t,w}]_{\hat{x}}, [Id]_{\hat{x}})< \varepsilon
\]

for all $(t,w) \in [0,1] \times L$, where $d$ denotes the distance in $Y$ defined in Section 2.
\end{lemma}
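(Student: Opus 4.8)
The plan is to combine the h-principle for the Oka manifold $Y$ with the parametric Andersén–Lempert theorem (Proposition \ref{AL}), using the fact that the lifted complete vector fields on $Y$ coming from $\CVF(X)$ span $T_\gamma Y$ at every point (established after Lemma \ref{lem:elliptic}). First I would apply the h-principle to the given homotopy $\gamma^t:W\to Y$ (whose endpoints $\gamma^0=[Id]_{\hat x}$ and $\gamma^1$ are holomorphic) to replace it by a smooth isotopy of holomorphic maps, still denoted $\gamma^t$, with $\gamma^t:W\to Y$ holomorphic for each $t$ and $\gamma^0=[Id]_{\hat x}$. Since $L=\hat L\subset W$ is a holomorphically convex compact and $\gamma^t(L)$ is a compact subset of $Y$ varying continuously with $t$, I can cover $[0,1]$ by finitely many subintervals and, by a partition-of-unity/compactness argument, write the isotopy as a finite composition of ``small'' isotopies, each of which moves $\gamma^t$ only slightly on $L$.

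Next, the key technical input is to realize each of these small pieces by a family of parametrized automorphisms. Using the globally defined complete vector fields $\{\theta_i\}_{i=1}^M\subset\CVF(X)$ whose lifts $\{\tilde\theta_i\}$ span $T_\gamma Y$ at every $\gamma\in Y$, one can, along the lines of the proof of Lemma \ref{lem:elliptic} but now parametrically over $L$ (and smoothly over $t$), express the time derivative $\tfrac{d}{dt}\gamma^t(w)$ as $\sum_i a_i(t,w)\tilde\theta_i(\gamma^t(w))$ for holomorphic-in-$w$, smooth-in-$t$ coefficients $a_i$; integrating, the flow $\phi^{a_1(t,w)}_{\tilde\theta_1}\circ\dots\circ\phi^{a_M(t,w)}_{\tilde\theta_M}$ applied to $[Id]_{\hat x}$ tracks $\gamma^t(w)$ on $L$ up to the desired small error (this is the local chart argument: near a fixed $\gamma$ the map $(t_1,\dots,t_M)\mapsto\phi^{t_1}_{\tilde\theta_1}\circ\dots\circ\phi^{t_M}_{\tilde\theta_M}(\gamma)$ is a biholomorphism onto a neighborhood, so one inverts it fiberwise). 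Because $\phi^s_{\tilde\theta_i}(\gamma)=[\phi^s_{\theta_i}\circ\gamma']_{\hat x}$ whenever $\gamma=[\gamma']_{\hat x}$, the composition $\phi^{a_1}_{\theta_1}\circ\dots\circ\phi^{a_M}_{\theta_M}$ is itself a genuine automorphism of $X$ depending holomorphically on $w$ and smoothly on $t$ (each $\phi^s_{\theta_i}\in\Aut(X)$ since $\theta_i$ is complete), so we need no further approximation here — the $\CVF(X)$ fields are already global and complete. Composing the finitely many pieces and inverting (using that jets of automorphisms form a group) yields $F^{t,w}\in\Aut_W(X)$, smooth in $t$ with $F^0=id$, such that $d([F^{t,w}\circ\gamma^{t,w}]_{\hat x},[Id]_{\hat x})<\varepsilon$ on $[0,1]\times L$.

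The main obstacle I anticipate is the parametric spanning/coefficient-extraction step: one must produce the coefficients $a_i(t,w)$ holomorphically in $w\in L$ and smoothly in $t$, globally along $\gamma^t(L)$, not just pointwise. This is where the strengthened conclusion after Lemma \ref{lem:elliptic} — that the finite collection $\{\tilde\theta_i\}$ spans $T_\gamma Y$ at \emph{every} point of $Y$, not merely off an analytic set — is essential, since it makes the linear-algebra problem $\sum_i a_i\tilde\theta_i(\gamma^t(w))=\partial_t\gamma^t(w)$ solvable with a continuous (indeed holomorphic-in-$w$) right inverse on a neighborhood of the compact $\gamma^t(L)$ for each $t$; a further compactness argument in $t$ glues these. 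A secondary, minor point is ensuring the relevant image compacts remain $\hol(W\times X)$-convex if one wished instead to route through Proposition \ref{AL}, but the route above via honest flows of complete fields sidesteps that entirely, so I would take it. One should also note that null-homotopy of $\gamma^1$ (rather than homotopy to $[Id]_{\hat x}$) suffices because $Y$ is path-connected when $X$ is, as remarked in the introduction; this lets us fix the endpoint of the homotopy to be exactly $[Id]_{\hat x}=\gamma^0$.
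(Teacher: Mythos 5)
Your first step (upgrading the homotopy to a smooth isotopy of holomorphic maps via the h-principle for the elliptic manifold $Y$) matches the paper, but from there you diverge, and the central step of your route has a genuine gap. You propose to realize $\gamma^{t,w}$ exactly as $\bigl[\phi_{\theta_1}^{a_1(t,w)}\circ\dots\circ\phi_{\theta_M}^{a_M(t,w)}\bigr]_{\hat{x}}$ with $a_i$ holomorphic in $w$ and smooth in $t$, and you justify this by ``inverting the chart fiberwise'' or, equivalently, by solving $\sum_i a_i\tilde{\theta}_i(\gamma^t(w))=\partial_t\gamma^t(w)$ with a holomorphic right inverse ``on a neighborhood of the compact $\gamma^t(L)$.'' Two problems. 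First, the ODE version is not what you need: the time-$t$ map of the non-autonomous field $\sum_i a_i(t,\cdot)\tilde{\theta}_i$ is \emph{not} the composition $\phi_{\tilde{\theta}_1}^{A_1}\circ\dots\circ\phi_{\tilde{\theta}_M}^{A_M}$ of the individual flows, and a finite sum of complete fields need not be complete, so its flow need not consist of automorphisms at all; replacing that flow by compositions of the individual (complete) flows is exactly the Euler-scheme approximation that constitutes the Andersén--Lempert theorem, i.e.\ the step you claim to sidestep. Second, the chart-inversion version: the map $(t_1,\dots,t_M)\mapsto\phi_{\tilde{\theta}_1}^{t_1}\circ\dots\circ\phi_{\tilde{\theta}_M}^{t_M}(\gamma)$ is a biholomorphism near $0$ only when $M=\dim Y$ and the $\tilde{\theta}_i(\gamma)$ are independent \emph{at that particular} $\gamma$; the enlarged collection that spans $T_\gamma Y$ at every point has $M>\dim Y$ members, so the map is merely a submersion, and a holomorphic right inverse over a neighborhood of the whole compact $\gamma^{t_j}(L)\subset Y$ requires a holomorphic splitting of the kernel subbundle of the trivial bundle over that neighborhood. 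Since $Y$ is not known to be Stein (the paper explicitly notes this), such a splitting is not automatic, and choosing honest local biholomorphisms from independent subcollections at different points of $\gamma^{t_j}(L)$ produces coefficients that do not patch holomorphically in $w$. This is precisely the obstacle you flag, and the sentence you offer does not resolve it. (It could likely be repaired by pulling the submersion back to a Stein neighborhood of $L$ in $W$ and splitting there, but that argument is absent.)

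For contrast, the paper's proof avoids global coefficient extraction entirely: after the h-principle it first applies Theorem \ref{thm:KR} to the images $\pi(\gamma^t)$ to reduce to jets fixing $\hat{x}$; then the inverse jets $(\gamma^{t,w})^{-1}$ have \emph{canonical} polynomial representatives $Q_j^{t,w}$ in fixed charts, automatically holomorphic in $w$ and smooth in $t$, which assemble into an isotopy of injective holomorphic maps $P^t$ on a neighborhood of the $\hol(W\times X)$-convex graph $K$ of the points $\hat{x}_j$ over $L$; Proposition \ref{AL} then approximates $P^t$ by parametrized automorphisms, and the Cauchy estimates convert uniform approximation near $K$ into $d$-approximation of jets. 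You also omit the extension of your (locally defined) coefficient functions from a neighborhood of $L$ to all of $W$, which is needed since $F$ must be defined on $[0,1]\times W$; this is fixable by Oka--Weil since $L$ is $\Olo(W)$-convex and only approximation is required, but it should be said.
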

\begin{proof}
Since both  $\gamma^0$ and $\gamma^1$ are holomorphic, $W$ is Stein and $Y$ is elliptic by Lemma \ref{lem:elliptic}, the h-principle applies: we can therefore assume that $\gamma^t$ is in fact a smooth isotopy of holomorphic maps.

Define $x_t$ to be $\pi(\gamma^t)$, i.e. the image of the jets $\gamma^t$ at $\hat{x}$.  As $x_t$ takes values in $X^N \setminus \Delta$, we may apply Theorem \ref{thm:KR}. Therefore we may assume without any loss of generality that for all $(t,w) \in [0,1] \times W$ the image of $\gamma^{t,w}$ (hence the one of $(\gamma^{t,w})^{-1}$)  is the fixed $N$-tuple $\hat{x}$.

This allows us to uniquely represent the jets $(\gamma^{t,w})^{-1}$ by parametrized polynomial mappings of total degree at most $k$ fixing $0\in \C^n$. Indeed, 
let $U\subset X$ be a disjoint union of coordinate neighborhoods $U_j$ of the fixed points $\hat{x}_j$ and $\phi_j:U_j\to \phi_j(U_j)\subset \C^{n}$ be the charts sending $\hat{x}_j$ to $0$.
For each $j=1,\dots,N$ and $(t,w) \in [0,1] \times W$ there exists a uniquely determined polynomial mapping $Q_j^{t,w}$ of total degree at most $k$ such that
\[
\left[ \phi_j\circ(\gamma^{t,w})^{-1}\circ\phi_j^{-1}\right]_0=\left[ Q_j^{t,w} \right]_0.
\] 
By uniqueness, these polynomials mappings which fix $0\in \C^n$ depend smoothly on $t \in [0,1]$ and holomorphically on $w \in W$.
For fixed $w\in W$ and for each $j=1,\dots,N$, since nondegenerate mappings are locally invertible, there exists a neighborhood ${V}_j$ of $0$ in $\C^{n}$ such that for all $t\in [0,1]$, $ Q_j^{t,w}|_{{V}_j}$ is injective and 
$ Q_j^{t,w}({V}_j)\subset \phi_j(U_j)$.
Given a compact set $L' \supset L$ such that $\mathring{L'} \supset L$, by compactness
we can shrink $V_j$ such that the above holds for all $(t,w)\in\I\times L'$.
Let $V$ be the disjoint union of $\phi_j^{-1}({V_j})$, and define the \textit{injective holomorphic map} $P^t$ on $\mathring{L'} \times V$ by setting
\[
P^t(w,z):=(w,\phi_j^{-1}\circ  Q_j^{t,w} \circ \phi_j(z)):\mathring{L'} \times V \to W \times X.
\]
Notice that the union of the ``graph of the $N$ fixed points'' $$K=\bigcup_{j=1}^N\{(w,\hat{x}_j): w \in L\}\subset W \times X$$ is a $\hol(W \times X)$-convex set which is fixed by $P^t$. 
Since $P^0$ is the identity, we can apply Proposition \ref{AL} and obtain a family of parametrized automorphisms $F^{t,w}$ which approximates $(\gamma^{t,w})^{-1}$ uniformly on compacts in a neighborhood of $K$. By the Cauchy estimates, this implies the approximation of the jet with respect to the distance function $d$ (see Section 2).
\end{proof}

\section{Proof}

The following technical proposition is similar to \cite[Proposition 4.4]{K-R} but with jet approximation instead of just pointwise approximation.
\begin{proposition}
\label{prop:jetapprox}
Let $L_1, L_2 \subset W$ be $\Olo(W)$-convex compact sets such that $L_1 \subset \mathring{L}_2$ and let $K,C \subset X$ be $\Olo(X)$-convex compact sets with $K \subset \mathring{C}$.
Let $\gamma:[0,1] \times W \to Y$ be a smooth isotopy of holomorphic jets such that $\gamma^{0,w} = [Id]_{\hat{x}}$ for all $w \in W$.
Then for every $\varepsilon, \alpha>0$ there exists $\delta=\delta(K,L_1,\varepsilon)>0$ such that if
\begin{equation}\label{smallness}
d(\gamma^{t,w},[Id]_{\hat{x}})<\delta \quad  \forall (t,w) \in [0,1] \times L_1, 
\end{equation}
then
\begin{enumerate}
\item[(i)] there exists $\psi:[0,1] \times L_1 \to \Aut(X)$ such that 
\begin{align} 
d(\psi^{t,w}(z),z) <\varepsilon \text{ for }& (t,w,z) \in [0,1]\times L_1 \times K \\
[\psi^{t,w}]_{\hat{x}} = \gamma^{t,w} \text{ for }& (t,w) \in [0,1]\times L_1
\end{align}
\item[(ii)] there exists $F:[0,1] \times W \to \Aut(X)$ with $F^{0,w}=Id$ for all $w \in W$ and such that 
\begin{align} 
d([F^{1-t,w}\circ \gamma^{1,w}]_{\hat{x}}, \gamma^{t,w}) <\alpha \text{ for }& (t,w) \in [0,1]\times L_2\\
d(F^{1-t,w}(z),\psi^{t,w}(z)) <\varepsilon \text{ for }& (t,w,z) \in [0,1]\times L_1 \times C
\end{align}
 such that $\psi_0=F_0=Id$.
\end{enumerate}
\end{proposition}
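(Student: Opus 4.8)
\textbf{Part (i)} is a direct consequence of the ellipticity of $Y$. By Lemma~\ref{lem:elliptic} there are complete vector fields $\theta_1,\dots,\theta_M$ on $X$ ($M=\dim Y$) whose lifts form a basis of $T_{[Id]_{\hat x}}Y$, so that $\Phi(u):=\phi_{\tilde\theta_1}^{u_1}\circ\dots\circ\phi_{\tilde\theta_M}^{u_M}([Id]_{\hat x})$ is a biholomorphism of a ball $B(0,r)\subset\C^M$ onto a neighbourhood $U_0$ of $[Id]_{\hat x}$ in $Y$. I would pick $\delta=\delta(K,L_1,\varepsilon)$ small enough that \eqref{smallness} forces $\gamma^{t,w}\in U_0$ on $[0,1]\times L_1$; then $f(t,w):=\Phi^{-1}(\gamma^{t,w})$ is well defined, smooth in $t$, holomorphic in $w$, vanishes at $t=0$ (since $\gamma^{0,w}=[Id]_{\hat x}$), and has $\|f\|$ as small as we please. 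Setting $\psi^{t,w}:=\phi_{\theta_1}^{f_1(t,w)}\circ\dots\circ\phi_{\theta_M}^{f_M(t,w)}$ produces an automorphism of $X$ (the $\theta_i$ are complete), smooth in $t$, holomorphic in $w$, with $\psi^{0,w}=Id$ and, by the defining property of the lifts, $[\psi^{t,w}]_{\hat x}=\Phi(f(t,w))=\gamma^{t,w}$; since the $f_i$ are uniformly tiny and flows depend continuously on time, $\psi^{t,w}$ is uniformly close to the identity on $K$ (and, after shrinking $\delta$ once more, on $C$, which is what (ii) will need).

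\textbf{Part (ii)} carries all the difficulty. The naive idea — factor $\gamma^{1,\cdot}$ globally as a composition of flows and hope the time-functions vanish where $\gamma^{1,\cdot}=[Id]_{\hat x}$ — is ruled out by the Vaserstein-type obstruction recalled in the introduction, so instead I would \emph{follow the isotopy in many small steps}. After reducing (via Theorem~\ref{thm:KR}, as in Lemma~\ref{lem:stepzero}) to the case where every $\gamma^{t,w}$ fixes the points $\hat x_i$, choose $0=t_0<t_1<\dots<t_m=1$ fine enough — using smoothness in $t$ and compactness of $L_2$ — that along each subinterval $\gamma^{t,\cdot}$ stays within a prescribed small $\eta$ of $\gamma^{t_j,\cdot}$ on $L_2$. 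Build $F$ by a finite downward induction: set $F^{0,\cdot}=Id$ (the value at $t=t_m=1$); assuming $F^{1-t_j,\cdot}$ is already built so that $[F^{1-t_j,w}\circ\gamma^{1,w}]_{\hat x}$ is close to $\gamma^{t_j,w}$ on $L_2$ and $F^{1-t_j,w}$ is close to $\psi^{t_j,w}$ on $L_1\times C$, extend it to $t\in[t_{j-1},t_j]$ by setting $F^{1-\tau_j(r),w}:=g_j^{r,w}\circ F^{1-t_j,w}$ for $r\in[0,1]$, where $\tau_j\colon[0,1]\to[t_{j-1},t_j]$ is a decreasing reparametrisation with $\tau_j(0)=t_j$ whose derivatives vanish at the endpoints (so $F^{s,w}$ is smooth in $s$ across the junctions) and $g_j^{r,w}$ is a small family of automorphisms, $g_j^{0,w}=Id$, realising the transition from $\gamma^{t_j,\cdot}$ to $\gamma^{\tau_j(r),\cdot}$.

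To produce $g_j$ I would first assemble an injective holomorphic family $P_j^{r,w}$, defined near a suitable $\hol(W\times X)$-convex compact containing both $L_1\times C$ and the sets $L_2\times\{\hat x_i\}$, with $P_j^{0,w}$ the inclusion, which agrees with (an extension of) $\psi^{\tau_j(r),w}\circ(\psi^{t_j,w})^{-1}$ over $L_1\times C$ and whose jet at the $\hat x_i$ equals $\gamma^{\tau_j(r),w}\circ(\gamma^{t_j,w})^{-1}$ over $L_2$. Because $\eta$ is small, \emph{every} jet and transition here is uniformly close to the identity in fixed local charts about the $\hat x_i$, so $P_j^{r,w}$ is obtained by gluing the $\psi$-piece over $L_1$ to the (polynomial, near-identity) jet-realisations over $L_2$ via a Cartan-type splitting of cocycles close to the identity — it is precisely this smallness that makes the splitting possible and removes the obstruction. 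Proposition~\ref{AL} then gives automorphisms $g_j^{r,w}$ approximating $P_j^{r,w}$, with $g_j^{0,w}=Id$. One checks that the jet condition propagates, $[F^{1-\tau_j(r),w}\circ\gamma^{1,w}]_{\hat x}\approx[g_j^{r,w}\circ\gamma^{t_j,w}]_{\hat x}\approx\gamma^{\tau_j(r),w}$ on $L_2$, and $F^{1-\tau_j(r),w}\approx\psi^{\tau_j(r),w}$ on $L_1\times C$ (errors do not amplify because the $g_j^{r,w}$ are near the identity); taking the step-errors geometrically small keeps the totals below $\alpha$ and $\varepsilon$, and $\hol(W\times X)$-convexity survives each Andersén-Lempert step since everything is a small perturbation of the identity.

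The main obstacle is exactly the step-$j$ construction of $P_j^{r,w}$: one must realise prescribed near-identity jets (at the fixed points, but with parameter ranging over all of $L_2$) by a single injective holomorphic map, make this compatible with the already-built local family $\psi$ over $L_1\times C$, and push it through the $m$-fold composition with the error- and convexity-bookkeeping intact. The conceptual novelty relative to the $k=0$ case of \cite{K-R} is the subdivision of the isotopy: it replaces the impossible global identity-near-$L_1$ factorisation of the large jet-map $\gamma^{1,\cdot}$ by $m$ genuinely small steps, each a perturbation of the identity and so free of the Vaserstein-type obstruction of the introduction.
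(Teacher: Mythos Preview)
Your Part~(i) is correct and is exactly what the paper does.

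For Part~(ii) the paper takes a much shorter route than yours. Rather than subdividing the isotopy, it passes to the infinitesimal level: from the family $\psi^{t,w}$ of Part~(i) one forms the non-autonomous vector field
\[
\Theta^{t,w}(z)=\frac{d}{ds}\Big|_{s=t}\psi^{1-s,w}\bigl((\psi^{1-t,w})^{-1}(z)\bigr),
\]
defined for $(t,w,z)\in[0,1]\times L_1\times X$. The key observation is that, because $[\psi^{t,w}]_{\hat x}=\gamma^{t,w}$, the lift $\tilde\Theta$ to $Y$ satisfies $\tilde\Theta^{t,w}(\gamma^{1-t,w})=\frac{d}{ds}\big|_{s=t}\gamma^{1-s,w}$; since $\gamma$ is defined for \emph{all} $w\in W$, this determines $\Theta$ (to the required jet order) along $[0,1]\times W\times\{\hat x_1,\dots,\hat x_N\}$ as well. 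One then takes a Runge neighbourhood $\Omega$ of $L_1\times K\cup L_2\times\{\hat x_1,\dots,\hat x_N\}$, integrates $\Theta$ to an injective holomorphic isotopy on $\Omega$, and applies Proposition~\ref{AL} \emph{once} to obtain $F$. The estimate on $L_1\times C$ comes from the flow approximating $\psi$, and the jet estimate on $L_2$ is the separate Anders\'en--Lempert error, which is why $\alpha$ can be chosen independently of $\delta$.

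Compared to this, your approach has two soft spots. First, the reduction ``via Theorem~\ref{thm:KR}, as in Lemma~\ref{lem:stepzero}'' is not clearly available here: Theorem~\ref{thm:KR} produces a single parametrized automorphism with no size control, whereas Proposition~\ref{prop:jetapprox} has a quantitative smallness hypothesis on $L_1$ and needs $F^{0,w}=Id$; moreover you would need a $t$-family of such reductions. Second, your ``Cartan-type splitting'' is not a standard Cartan pair: over the overlap $L_1$ the $\psi$-piece and the polynomial jet realisation agree only at the $k$-jet level at $\hat x$, and splitting that higher-order discrepancy holomorphically in $w$ across $L_1\subset L_2$ is precisely the kind of problem the paper's vector-field trick sidesteps. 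Your subdivision into many small $t$-steps is really the discrete shadow of the paper's continuous flow; by working infinitesimally the paper gets all the steps---and the gluing---for free.
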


\begin{proof}
Let us examine the nature of the local biholomorphism near $[Id]_{\hat{x}} \in Y$ given by Lemma \ref{lem:elliptic}.
In particular observe that given a compact $K \subset X$, for $(t_1,\dots,t_M) \in U \subset \C^M$ small enough, the automorphism $\phi_{\theta_1}^{t_1} \circ \dots \circ \phi_{\theta_M}^{t_M} \in \Aut(X)$ is going to be arbitrarily close to the identity on $K$.
We can then pick $\delta$ such that 
\[
\{\gamma^{t,w} :(t,w) \in [0,1] \times L_1\} \subset \{\phi_{\tilde{\theta}_1}^{t_1} \circ \dots \circ \phi_{\tilde{\theta}_M}^{t_M}([Id]_{\hat{x}}):(t_1,\dots,t_M) \in U \} \subset Y.
\]
Apply Lemma \ref{lem:elliptic} to obtain a family of parametrized automorphisms $\psi:[0,1]\times L_1 \to \Aut(X)$ such that $\psi_0=Id$, $dist(\psi^{t,w}(z),z)<\varepsilon/2$ and $[\psi^{t,w}]_{\hat{x}}=\gamma^{t,w}$ for $(t,w,z) \in [0,1]\times L_1 \times K$. This proves (i).

Consider the non-autonomous parametrized vector field on $X$
\[
  \Theta^{t,w}(z)=\frac{d}{ds}\bigg|_{s=t} \psi^{1-s,w}((\psi^{1-t,w})^{-1}(z))
\]
defined for $(t,w,z)\in [0,1]\times L_1 \times X$ and observe that the lift of $\Theta$ to $Y$ satisfies
\[
\tilde{\Theta}^{t,w}(\gamma^{1-t,w})=\frac{d}{ds}\bigg|_{s=t} \gamma^{1-s,w}.
\]
Therefore $\Theta$ is well defined on $[0,1]\times L_1 \times X \cup [0,1]\times W \times \{ \hat{x}_1,\dots, \hat{x}_N \}$.

By \cite[Lemma 2.2]{FR}, as in the proof of Lemma \ref{lem:stepzero}, there is a Runge neighborhood $\Omega \subset W \times X$ of  $L_1 \times K \cup L_2 \times \{ \hat{x}_1,\dots, \hat{x}_N \}$ such that the flow $f^t:\Omega \to W \times X$ of $\Theta^t$ is injective and $f^t(\Omega)$ is Runge for every $t \in [0,1]$.
Using Proposition \ref{AL}, we obtain the desired $F^t:W \to \Aut(X)$. Observe that condition (4.4) only depends on this last step, hence we can approximate arbitrarily well and choose $\alpha$ only when invoking (ii).
\end{proof}

To prove the main theorem, we will construct families of automorphisms $F:[0,1] \times W \to \Aut(X)$ using the above result inductively on a growing sequence of compacts of $W \times X$. 
In order to apply Proposition \ref{prop:jetapprox} again (on a larger compact, in views of exhausting the parameter space $W$), we need a smooth isotopy of parametrized jets that are close to the identity for all $t \in [0,1]$ over the compact $L_2$. This homotopy does not come for free during the induction step for the following reason.

Let $\gamma^t$ be as above starting at the constant jet $\gamma^0=[Id]_{\hat{x}}$ and apply Proposition \ref{prop:jetapprox} to obtain a family of parametrized automorphisms $F^t$. We now have a new homotopy of jets
\[
h:[0,1] \times W \to Y
\]
given by
\[
h^{t,w}=
\begin{cases*}
\gamma^{2t,w} & for $0 \leq t \leq \frac{1}{2}$ \\
[F^{2t-1,w} \circ \gamma^{1,w}]_{\hat{x}} & for $\frac{1}{2} \leq t \leq 1$
\end{cases*}
\]
connecting $h^{0,w}=[Id]_{\hat{x}}$ to $h^{1,w} \approx[Id]_{\hat{x}}$ for $w \in L_2$.
We cannot immediately use Proposition \ref{prop:jetapprox} for
$h^{t,w}$ over the larger $L_2$, as the smallness condition (\ref{smallness}) required for all $t$ is only satisfied at the end point $t=1$.

However, this issue can be handled as follows:

\begin{lemma} [Lemma 4.2, \cite{K-R}] \label{lem:homotopy}
Let $L \subset W$ be a $\Olo(W)$-convex compact set and $h^t:W \to Y$ be a smooth homotopy between the constant $h^0=[Id]_{\hat{x}}$ and a holomorphic map $h^1$. Then there exists $\varepsilon=\varepsilon(h,L)>0$ such that for every $0<\alpha < \varepsilon$ and every smooth $F:[0,1] \times W \to Y$ with $F^t = h^{2t}$ for $t \leq \frac{1}{2}$ satisfying
\[
d(F^{t,w},F^{1-t,w}) < \alpha/2 \text{ for } (t,w) \in [0,1]  \times L
\]
and every  $\Olo(W)$-convex compact set $L' \subset \mathring{L}$, there exists an analytic homotopy $H: [0,1] \times W \to Y$ between $[Id]_{\hat{x}}$ and $h^1$ such that
\[
d(H^{t,w}, [Id]_{\hat{x}}) < \alpha \text{ for } (t,w) \in [0,1]  \times L'
\]
\end{lemma}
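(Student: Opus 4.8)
The plan is to deduce the statement from the Oka property of $Y$ recorded in Section~2, the only real work being to manufacture, before any appeal to the h-principle, a replacement homotopy which over $L'$ is supported in a small neighbourhood of $[Id]_{\hat x}$ and which lies in the correct homotopy class relative to its endpoints; the hypothesis on $F$ is exactly what certifies that this class is trivial. First I would fix, using Lemma~\ref{lem:elliptic}, a coordinate ball $B\subset Y$ about $[Id]_{\hat x}$, biholomorphic to a ball of $\C^M$ carrying $[Id]_{\hat x}$ to $0$ and containing every straight segment from $0$ to a point of $B$. Then I choose $\varepsilon=\varepsilon(h,L)$ so small that the closed $\varepsilon$-ball about $[Id]_{\hat x}$ lies well inside $B$; as holds in the situation in which this lemma is invoked (see the paragraph preceding it), we may assume that $h^1(L)$ lies in that $\varepsilon$-ball, a property which is in any case necessary for the conclusion since $H^1=h^1$ must be $\alpha$-close to $[Id]_{\hat x}$ on $L'$.

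Next comes the reflection estimate, which is the heart of the matter. Set $r^{t,w}:=h^{2t,w}$ for $t\le\tfrac12$ and $r^{t,w}:=h^{2-2t,w}$ for $t\ge\tfrac12$, i.e. the loop ``$h$ followed by $h$ reversed'' based at $[Id]_{\hat x}$. From $F^t=h^{2t}$ on $[0,\tfrac12]$ and $d(F^{t,w},F^{1-t,w})<\alpha/2$ one reads off immediately that $d(F^{t,w},r^{t,w})<\alpha/2$ for $(t,w)\in[0,1]\times L$ and that $d(F^{1,w},[Id]_{\hat x})<\alpha/2$. Thus $F$ is a genuine smooth homotopy which over $L$ tracks the contractible out-and-back loop $r$ to within $\alpha/2$ and which returns to within $\alpha/2$ of $[Id]_{\hat x}$. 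Since a loop in the mapping space uniformly close to a nullhomotopic loop is itself nullhomotopic once the distance falls below a scale determined by $h$ and $L$ (this being where the quantitative choice of $\varepsilon(h,L)$ is consumed, together with the canonical contraction of $r$), closing $F$ up with a short path in $B$ joining $F^1$ back to $[Id]_{\hat x}$ yields a null loop; equivalently, $h|_L$, regarded as a path in $\mathrm{Map}(L,Y)$ from the constant $[Id]_{\hat x}$ to $h^1|_L$, is homotopic rel endpoints to the ``short'' path $t\mapsto t\cdot h^1|_L$ with values in $B$.

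Now I would build $H$ by gluing. On a neighbourhood $\Omega\subset W$ of $L'$ take the small homotopy $\widehat H^{t,w}:=t\cdot (h^1)^w$ in the coordinates of $B$, which is holomorphic in $w$, smooth in $t$, has values in the $\varepsilon$-ball, and has endpoints $[Id]_{\hat x}$ and $h^1$; outside $\Omega$ interpolate to $h$ reparametrised to $t\in[0,1]$, which is possible precisely because of the homotopy rel endpoints produced in the previous paragraph. This gives a smooth homotopy $\bar H^{t,w}\colon W\to Y$ from $[Id]_{\hat x}$ to $h^1$ with $\bar H\equiv\widehat H$ on $\Omega$ and $\bar H^0,\bar H^1$ holomorphic. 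Finally I invoke the parametric Oka principle for the Oka--Forstneri\v c manifold $Y$ from Section~2, in its form with approximation and interpolation on the $\Olo(W)$-convex compact $L'$ and fixing $t\in\{0,1\}$: since $\bar H^t$ is already holomorphic on $\Omega\supset L'$ for every $t$ and holomorphic on all of $W$ for $t\in\{0,1\}$, the h-principle deforms $\bar H$ to a smooth isotopy of holomorphic maps $H^t\colon W\to Y$ joining $[Id]_{\hat x}$ to $h^1$ and remaining uniformly within $\varepsilon$ of $\widehat H$ on $L'$. Since $d(\widehat H^{t,w},[Id]_{\hat x})\le\varepsilon$ on $[0,1]\times L'$ by convexity of the segment, shrinking $\varepsilon$ at the outset so that $2\varepsilon<\alpha$ gives $d(H^{t,w},[Id]_{\hat x})<\alpha$ there. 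The Cauchy estimates recalled in Section~2 ensure these estimates on $Y$ are all governed by uniform estimates on compacts of $X$, so nothing is lost in passing between the two.

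I expect the main obstacle to be the homotopy-theoretic step of the second paragraph: a priori the globally defined $h$ and the small local homotopy $\widehat H$ need not be homotopic rel endpoints over $L'$, and without that one cannot amalgamate them into a single homotopy $W\to Y$ to feed to the Oka principle. Removing this obstruction is exactly what the condition $d(F^{t,w},F^{1-t,w})<\alpha/2$ is for, presenting $h$, traversed forward and then approximately backward, as a contractible loop near $[Id]_{\hat x}$ over $L$. A secondary, purely technical point is that the Oka-principle deformation must be kept uniformly small on $L'$, which is precisely the refinement of the h-principle with approximation and interpolation over the $\Olo(W)$-convex set $L'$; once both of these are in place the rest is routine bookkeeping with the Cauchy estimates.
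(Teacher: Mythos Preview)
The paper does not supply its own proof: the lemma is quoted from \cite[Lemma~4.2]{K-R}, with only the remark that the argument there ``uses only general topological constructions, as well as the Oka property which holds for any elliptic manifold $Y$.'' Your outline follows precisely that template---a topological gluing producing a continuous homotopy which is short over $L'$, followed by the Oka principle with approximation on $L'$ and interpolation at $t\in\{0,1\}$---so the strategy is the intended one.

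There is, however, a genuine gap at the step you label ``equivalently''. From the reflection hypothesis one gets $F\vert_{[1/2,1]}\cdot\sigma\sim h^{-1}$ rel endpoints (via the straight-line homotopy between $F^{t}$ and $h^{2-2t}$), hence the closed-up loop $h\cdot F\vert_{[1/2,1]}\cdot\sigma$ is \emph{automatically} null; this nullity therefore carries no information about $h$ beyond what you started with. Unwinding, the only relation you obtain is $h\sim\sigma^{-1}\cdot(F\vert_{[1/2,1]})^{-1}$, and the second factor is a path from $F^{1}$ to $h^{1}$ uniformly close to $h$ itself---not to a short arc in $B$. So ``$h$ is homotopic rel endpoints to $t\mapsto t\cdot h^{1}$'' does not follow, and your gluing of $\widehat H$ with $h$ over the annulus is not justified.

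The fix is to notice that the statement as printed appears to contain a slip: in the paper's own application (the induction step of Theorem~\ref{main}) the output satisfies $H^{1}=F^{1}$, not $H^{1}=h^{1}$, and indeed the hypothesis at $t=0$ gives $d(F^{1,w},[Id]_{\hat x})<\alpha/2$ while saying nothing about $h^{1}$. With the endpoint $F^{1}$ your argument is clean: nullity of the closed loop is exactly the statement that the globally defined path $F$ is homotopic rel endpoints, over $L$, to the short arc $\sigma^{-1}$ from $[Id]_{\hat x}$ to $F^{1}$; one then glues $F$ with $\sigma^{-1}$ across a collar in $\mathring L\setminus L'$ and applies the Oka principle with approximation on $L'$. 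So your plan is correct once aimed at the right endpoint; only the attempt to force $H^{1}=h^{1}$ introduces the circularity.
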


Note that \cite[Lemma 4.2]{K-R} is \textit{stated} for the manifold $Y$ which stands for $X^N\setminus \Delta$. However, the proof uses only general topological constructions, as well as the Oka property which holds for any elliptic manifold $Y$.  

We now proceed with the proof of Theorem \ref{main}.

\begin{proof}[Proof of Theorem \ref{main}]
Let $L_j \subset W, K_j \subset X, \ j \in \N$ be exhaustions by compact holomorphically convex compact sets such that $L_j \subset \mathring{L}_{j+1}$ and $K_j \subset \mathring{K}_{j+1}$. Assume that $L_0= \emptyset$ and $(\hat{x}_1, \dots, \hat{x}_N) \in \mathring{K}_0$. Fix a sequence $\varepsilon_j>0$ such that $\varepsilon_j<d(K_{j-1},X \setminus K_j)$ and $\sum \varepsilon_j < + \infty$.

Since $\gamma : W \to Y$ is null-homotopic, there exists an isotopy on holomorphic maps $\gamma^t:W \to Y$ such that $\gamma^0=[Id]_{\hat{x}}$ and $\gamma^1=\gamma$. We can immediately apply Lemma \ref{lem:stepzero} to obtain $\varphi_0:[0,1] \times W \to \Aut(X)$ such that
\[
d([\varphi_0^{t,w} \circ \gamma^{t,w}]_{\hat{x}}, [Id]_{\hat{x}})<min(\varepsilon_0,\delta(K_0,L_1,\varepsilon_1/2)) \text{ for } (t,w) \in [0,1] \times L_1
\]
where $\delta$ is as in Proposition \ref{prop:jetapprox}.

Conclusion (i) in the latter gives $\psi:[0,1] \times L_1 \to \Aut(X)$ such that 
\begin{align*} 
d(\psi^{t,w}(z),z) <\varepsilon_1/2 \text{ for }& (t,w,z) \in [0,1]\times L_1 \times K_0 \\
[\psi^{t,w}]_{\hat{x}} =[\varphi_0^{t,w} \circ \gamma^{t,w}]_{\hat{x}} \text{ for }& (t,w) \in [0,1]\times L_1
\end{align*}
Let $C_1 \subset X$ be a $\Olo(X)$-convex compact containing the $\varepsilon_1/2$-envelope of $$K_1 \cup \psi^{[0,1], L_1}(K_1).$$
Let $\alpha_1:= min(\varepsilon_1, \delta(C_1, L_2,\varepsilon_2/2),\varepsilon([\varphi_0^{t,w} \circ \gamma^{t,w}]_{\hat{x}},L_2))/2$, where $\varepsilon([\varphi_0^{t,w} \circ \gamma^{t,w}]_{\hat{x}},L_2)$ is the one arising from Lemma \ref{lem:homotopy}.

By (ii) in Proposition \ref{prop:jetapprox} there exists $\varphi_1:[0,1] \times W \to \Aut(X)$ such that
\[
d([\varphi_1^{1-t,w}\circ \varphi_0^{1,w} \circ \gamma^{1,w}]_{\hat{x}},[\varphi_0^{t,w} \circ \gamma^{t,w}]_{\hat{x}} ) < \alpha_1
\]
for $(t,w) \in [0,1]\times L_2$ and
\[
d(\varphi_1^{1-t,w}(z), \psi^{t,w}(z) ) <\varepsilon_1/2 \text{ for } (t,w,z) \in [0,1]\times L_1 \times C_1.
\]
We just proved the initial step of the following induction.

Suppose for $j=1, \dots, k$ we have $C_j \subset X$ holomorphically convex sets and $\varphi_j:[0,1] \times W \to \Aut(X)$ such that
\begin{enumerate}[(a)]
\item $\varphi_k^{0,w}=Id \in \Aut(X)$ for all $w \in W$;
\item $d([\varphi_k^{1-t,w} \circ \varphi_{k-1}^{1,w} \circ \dots \circ \varphi_0^{1,w} \circ \gamma^{1,w}]_{\hat{x}},h^{t,w})<\alpha_k$ for $(t,w) \in [0,1] \times L_{k+1}$;
\item $\mathring{C}_k \supset K_{k} \cup F_k^{t,w}(  (F_{j-1}^{t,w})^{-1} (K_k))$ for each $j=1, \dots, k$, $t \in [0,1]$ and $w \in L_j \setminus L_{j-1}$;
\item for each $j=1, \dots, k$, if $w \in L_j \setminus L_{j-1}$ then  $d(\varphi_k^{t,w}(z),z) < \varepsilon_k$ for each $t \in [0,1]$ and $z  \in K_k \cup  F_{k-1}^{t,w}(  (F_{j-1}^{t,w})^{-1} (K_k))$.
\end{enumerate}
where $h:[0,1] \times W \to Y$ is a homotopy between $[Id]_{\hat{x}}$ and $[\varphi_{k-1}^{1,w} \circ \dots \circ \varphi_0^{1,w} \circ \gamma^{1,w}]_{\hat{x}}$, $F_k^{t,w}(z)=\varphi_k^{t,w} \circ \varphi_{k-1}^{t,w} \circ \dots \circ \varphi_0^{t,w}(z)$, $$\alpha_k=min(\varepsilon_k,\delta(C_k,L_{k+1}, \varepsilon_{k+1}/2), \varepsilon(h,L_{k+1}))/2.$$

We first explain how the induction provides a familiy of parametrized automorphisms $G:[0,1] \times W \to \Aut(X)$ such that $G^{0,w}= Id$ and $G^{1,w}$ satisfies the thesis of the theorem. Thanks to (c) and (d), \cite[Lemma 4.1]{K-R} ensures that the sequence $F_k:[0,1] \times W \to \Aut(X)$ converges to $F:[0,1] \times W \to \Aut(X)$ uniformly on compacts, while condition (b) evaluated at $t=0$ shows that the inverse of $F^{1,w}$ provides the required parametrized automorphism. The fact that such inverse is null-homotopic is guaranteed by (a).

Let us now assume that we have the required objects for $j=1, \dots, k$, we begin by considering the homotopy $\tilde{H}:[0,1] \times W \to \Aut(X)$ given by
\[
\tilde{H}^{t,w}=
\begin{cases*}
h^{2t,w} & for $0 \leq t \leq \frac{1}{2}$ \\
[\varphi_k^{2t-1,w} \circ  \varphi_{k-1}^{1,w} \circ \dots \circ \varphi_0^{1,w} \circ \gamma^{1,w}]_{\hat{x}} & for $\frac{1}{2} \leq t \leq 1$
\end{cases*}
\]
Observe that condition (b) and the definition of $\alpha_k$ ensure we can apply Lemma \ref{lem:homotopy}, hence there exists an homotopy $H:[0,1] \times W \to \Aut(X)$ such that $H^{0,w}=Id$, $H^{1,w}=[\varphi_k^{1,w} \circ  \varphi_{k-1}^{1,w} \circ \dots \circ \varphi_0^{1,w} \circ \gamma^{1,w}]_{\hat{x}}$ and
\[
d(H^{t,w}, [Id]_{\hat{x}}) < \delta(C_k,L_{k+1},\varepsilon_{k+1}/2) \text{ for } (t,w) \in [0,1]  \times L_k
\]
Part (i) of Proposition \ref{prop:jetapprox} provides the existence of $\psi:[0,1] \times L_k \to \Aut(X)$ such that 
\begin{align} 
d(\psi^{t,w}(z),z) <\varepsilon_{k+1}/2 \text{ for }& (t,w,z) \in [0,1]\times L_k \times C_k \\
[\psi^{t,w}]_{\hat{x}} = H^{t,w} \text{ for }& (t,w) \in [0,1]\times L_k
\end{align}
Let $C_{k+1} \subset X$ be a holomorphically convex compact set containing the $\varepsilon_{k+1}/2$-envelope of
\begin{equation}
\tag{*}
C_k \cup  K_{k+1} \cup \psi^{[0,1],L_k}(K_{k+1}) \cup \psi^{[0,1],L_{j-1}}(K_{k+1})
\end{equation}
and of
\begin{equation}
\tag{*}
\psi^{t,w}(F_k^{t,w}((F_{j-1}^{t,w})^{-1}(K_{k+1}))
\end{equation}
for each $j=1, \dots, k$ and $(t,w) \in [0,1] \times L_{j-1}$.

Part (ii) of Proposition \ref{prop:jetapprox} gives $\varphi_{k+1}:[0,1] \times W \to \Aut(X)$ with $\varphi_{k+1}^{0,w}=Id$ for all $w \in W$ and such that 
\begin{align} 
d([\varphi_{k+1}^{1-t,w}\circ H^{1,w}]_{\hat{x}}, H^{t,w}) <\alpha_{k+1} \text{ for }& (t,w) \in [0,1]\times L_{k+2}\\
d(\varphi_{k+1}^{1-t,w}(z),\psi^{t,w}(z)) <\varepsilon_{k+1}/2 \text{ for }& (t,w,z) \in [0,1]\times L_k \times C_{k+1}
\end{align}
with $\alpha_{k+1}= min(\varepsilon_{k+1},\delta(C_{k+1},L_{k+2}, \varepsilon_{k+2}/2), \varepsilon(H,L_{k+2}))/2$.

We now explain the reason these choices provide the $k+1$-th step.
Condition (a) is clearly satisfied.
Equation (4.8) and the fact that $H^{1,w}=[\varphi_{k}^{1,w} \circ \dots \circ \varphi_0^{1,w} \circ \gamma^{1,w}]_{\hat{x}}$ provide condition (b).
Equation (4.9) tells that the image of $\varphi_{k+1}$ is $\varepsilon_{k+1}$-close to the one of $\psi$, hence $C_{k+1}$ also contains the sets in (*) if we substitute $\psi$ with $\varphi_{k+1}$, thus condition (c) is fulfilled.
Condition (d) is obtained as a combination of (4.6) and (4.9).

\end{proof}

\section{Discussion and Questions}

Theorem \ref{main} covers the case of a \textit{finite} number of points; in a previous paper by the second author \cite{Ugolini} as well as in the first paper on jet interpolation in $\C^n$ \cite{F-Interpolation} the (parametric) jet interpolation is provided for a special class of sequences.

\begin{definition}[\cite{RosayRudin}] \label{def:tame}
A closed discrete sequence of points $(a_j)_{j\in\N} \subset \C^n$ without repetition is {\em tame} if there exists a holomorphic automorphism 
$F \in \Aut(\C^n)$ such that 
\[	F(a_j)=(j,0,\dots,0) \ \ \text{for all}\ j=1,2,\ldots.
\]
\end{definition}

The reason to restrict to such sequences is given by the fact that not all sequences in $\C^n$ are equivalent under the action of $\Aut(\C^n)$ and the one consisting of natural numbers in the first axis presents good properties of flexibility. In particular $\Aut(\C^n)$ acts transitively on $\N \times \{ 0\}^{n-1}$, thus it is common to speak about \textit{tame sets}.

In \cite{Ugolini} the author provides parametric jet interpolation at such sequences, yet he does not consider the possibility of having a countable amount of parametrized points. In fact, it is not well understood whether tame sequence are \textit{generic} in some suitable sense. To our knowledge and opinion, the result shedding the most light on the issue is due to Winkelmann:

\begin{theorem}[\cite{WinkelmannTame}]

Let $\{a_k\}_{k \in \N} \subset \C^n, \ n>1$ be a discrete sequence without repetition such that

\begin{equation}
\sum_k \frac{1}{| | a_k| | ^{2n-2}} < \infty
\end{equation}

Then $\{a_k\}_{k \in \N}$ is tame.
\end{theorem}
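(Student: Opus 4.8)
The plan is to reduce tameness of $\{a_k\}$ to the construction of a single automorphism of $\C^n$ carrying the sequence to the standard one $\N\times\{0\}^{n-1}$, built as an infinite composition of shears, and to use the summability condition $\sum_k \|a_k\|^{-(2n-2)} < \infty$ precisely to guarantee convergence of that composition. First I would recall the standard machinery (Rosay--Rudin, Forstneri\v{c}--Rosay): given a closed discrete set with no repetition, one can after an initial linear change of coordinates assume the points are in ``general position,'' e.g. all $a_k$ have distinct first coordinates and none lies on the hyperplane $\{z_1 = j\}$ for $j\in\N$. One then moves the points one at a time: having already placed $a_1,\dots,a_{k-1}$ at the lattice points $(1,0,\dots,0),\dots,(k-1,0,\dots,0)$, one seeks a shear (or short composition of shears) fixing those $k-1$ lattice points and sending $a_k$ to $(k,0,\dots,0)$, while moving every other point $a_j$ ($j>k$) only by a small amount, controlled in terms of $\|a_j\|$.

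The key quantitative step is the shear estimate. A shear of the form $z \mapsto (z_1, z_2 + f(z_1,z_3,\dots,z_n), z_3, \dots, z_n)$ (and its analogues in the other coordinates) fixes a point iff $f$ vanishes at its relevant projection; one chooses $f$ as a polynomial interpolating the prescribed behavior at the $k$ already-placed lattice points and at $a_k$. The displacement such a shear inflicts on a far-away point $a_j$ is governed by the growth of $f$, hence by a polynomial of degree $O(k)$ in the coordinates, which on the ball of radius $R$ grows like $R^{O(k)}$. To beat this one exploits that the lattice points $(1,0,\dots,0),\dots,(k-1,0,\dots,0)$ together with $a_k$ all lie in a bounded region of size $O(k + \|a_k\|)$, and that the number of points $a_j$ with $\|a_j\| \le R$ is, by the discreteness plus the summability hypothesis, controlled: the series $\sum \|a_k\|^{-(2n-2)} < \infty$ forces the counting function $\#\{j : \|a_j\|\le R\}$ to grow slower than $R^{2n-2}$, and more importantly makes the tail $\sum_{\|a_j\| > R}\|a_j\|^{-(2n-2)}$ decay. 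The exponent $2n-2$ is exactly the real codimension-$2$ volume threshold that lets one route each new point to the lattice through a path avoiding a neighborhood of all the other $a_j$ and of the already-placed lattice, so that the interpolating shear can be taken with small coefficients relative to the configuration.

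Concretely, at step $k$ I would: (1) pick $\varepsilon_k > 0$ with $\sum_k \varepsilon_k < \infty$ and $\varepsilon_k$ small compared to the current minimal separation; (2) construct $\Phi_k \in \Aut(\C^n)$, a composition of finitely many shears, fixing the lattice points $(1,0,\dots,0),\dots,(k,0,\dots,0)$ to first order where needed, with $\Phi_k(a_k') = (k,0,\dots,0)$ (where $a_k' = \Phi_{k-1}\circ\cdots\circ\Phi_1(a_k)$ is the current image of $a_k$), and with $\|\Phi_k(z) - z\| < \varepsilon_k$ for all $z$ in the ball of radius $R_k$, where $R_k$ is chosen large enough to eventually contain every point but small enough that $\varepsilon_k$-control is achievable given the summability; (3) verify that $\Phi = \lim_k \Phi_k\circ\cdots\circ\Phi_1$ exists as an automorphism of $\C^n$ on compacts (this is where $\sum\varepsilon_k < \infty$ and the Andersén--Lempert-type convergence lemma enter) and that $\Phi(a_j) = (j,0,\dots,0)$ for every $j$ because each $a_j$ is ``frozen'' from step $j$ onward up to a convergent error. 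The main obstacle is step (2): making the shear displacement on the far points genuinely small requires a careful choice of the degree and coefficients of the interpolating polynomials together with a geometric argument that the configuration of already-placed points plus $a_k$ occupies only a controlled region, and it is precisely here that the hypothesis $\sum_k \|a_k\|^{-(2n-2)} < \infty$ is indispensable — without it the counting function of the sequence could grow too fast for the shears to stay close to the identity, and the infinite composition would diverge. I would expect to cite the Rosay--Rudin shear lemmas and a Forstneri\v{c}--Rosay convergence result to package the routine parts, and spend the bulk of the argument on the quantitative shear-displacement bound.
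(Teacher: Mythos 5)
This statement is not proved in the paper at all: it is quoted verbatim from Winkelmann \cite{WinkelmannTame} as background for the discussion in Section 5, so there is no in-paper argument to compare your sketch against. Judged on its own, your proposal has a genuine gap at its core, namely step (2). The ``move one point at a time by shears, keeping each $\Phi_k$ $\varepsilon_k$-close to the identity on a ball $B_{R_k}$'' scheme, with the controls you actually state, never uses the hypothesis $\sum_k \|a_k\|^{-(2n-2)}<\infty$ in a way that could close the argument: the interpolating polynomial of degree $O(k)$ vanishing at the placed lattice points and taking the required value at $a_k'$ can be made small on $B_{R_k}$ for \emph{any} discrete sequence (after reordering by norm), and the convergence lemma for infinite compositions likewise needs only $\sum\varepsilon_k<\infty$. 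If the scheme worked as described it would prove that every closed discrete sequence in $\C^n$ is tame, which is false --- Rosay and Rudin \cite{RosayRudin} construct non-tame discrete sequences. The real difficulty, which you gesture at but do not resolve, is the uncontrolled displacement of the infinitely many points $a_j'$ lying \emph{outside} $B_{R_k}$ at step $k$: these can be thrown into positions (colliding in projection with the lattice or with each other, or re-entering later balls badly) from which the induction cannot continue, and your assertion that $2n-2$ is ``exactly the real codimension-$2$ volume threshold that lets one route each new point through a path avoiding the others'' is a heuristic, not an estimate.

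The actual mechanism behind the exponent is different. In Winkelmann's proof, $2n-2$ is $\dim_{\R}\mathbb{P}^{n-1}(\C)$: for a point $a$ with $\|a\|=R$, the set of directions $[v]\in\mathbb{P}^{n-1}(\C)$ such that $a$ lies within distance $c$ of the complex line $\C v$ has measure comparable to $(c/R)^{2n-2}$, so the summability hypothesis yields, by a Borel--Cantelli argument, a direction along which the linear projection $\C^n\to\C^{n-1}$ restricted to $\{a_k\}$ is proper (and, generically, injective). One then flattens the sequence into a hyperplane by a shear interpolating the ``heights'' over the discrete projected set, and reduces to the Rosay--Rudin result that discrete sequences contained in a complex line are tame. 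The hypothesis is thus consumed by a measure-theoretic genericity argument for projections, not by a per-step bound on shear displacements; to repair your approach you would have to either import that projection step or supply the missing quantitative control on the far points, which your sketch does not do.
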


The reason this result provides an insight on the topology of the class of tame sequences is that condition (5.1) is open in $\ell^2$.

\begin{question} \label{question}
Let $W$ be a Stein manifold and $a_k: W \to \C^n$ for $k \in \N$ a sequence of holomorphic maps such that $a_k(w) \neq a_j(w)$ for $j \neq k$ and all $w \in W$. Furthermore assume that
\[
\sum_k \frac{1}{| | a_k(w)| | ^{2n-2}} < \infty
\]
for all $w \in W$. What topological conditions do we need on $\{a_k\}_{k \in \N}$ to ensure the existence of a holomorphic $F: W \to \Aut(\C^n)$ such that $F^w (k,0,\dots,0)=a_k(w)$ for all $w \in W$ and $k \in \N$?
\end{question}

A similar question can be posed for the newly introduced notion of weakly and strongly tame sequences in a complex Stein manifold with the density property:

\begin{definition}[\cite{WTame}]
Let $X$ be a complex manifold.
An infinite discrete
subset $D$ is called weakly tame
if for every exhaustion function $\rho : X \to \R$
and every function $\zeta : D \to \R$
there exists an automorphism $\Phi$ of $X$
such that $\rho(\Phi(x)) \geq \zeta(x)$ for all $x \in D$.
\end{definition}

\begin{definition}[\cite{STame}]
\label{def-tame}
Let $X$ be a complex manifold. We call a closed discrete infinite set $D \subset X$ a strongly tame set if for every injective mapping $f \colon D \to D$ there exists a holomorphic automorphism $F \in \Aut(X)$ such that $F_{|_D} = f$.
\end{definition}

It is worth mentioning that both notions are equivalent to the standard definition of tameness for $X=\C^n$.

If $X$ is a Stein manifold with the density property, any two strongly tame sequences are $\Aut(X)$ equivalent \cite[Proposition 2.4]{STame}; providing an answer to Question \ref{question} for wealky tame sequences should then be harder as the previous result is not known for this class of manifolds.

\bibliography{bib2}
\bibliographystyle{amsalpha}

\end{document}